\DeclareMathOperator{\ord}{ord}
\DeclareMathOperator{\dime}{dim}
\DeclareMathOperator{\pideg}{PI-deg}
\DeclareMathOperator{\gcdi}{gcd}
\DeclareMathOperator{\lcmu}{lcm}
\DeclareMathOperator{\diagonal}{diag}
\DeclareMathOperator{\kere}{ker}
\DeclareMathOperator{\ran}{rank}
\numberwithin{equation}{section}
\newtheorem{theo}{Theorem}[section]
\newtheorem{defi}[theo]{Definition}
\newtheorem{lemm}[theo]{Lemma}
\newtheorem{rema}[theo]{Remark}
\newtheorem{prop}[theo]{Proposition}
\begin{document}

\setcounter{page}{1} 
\baselineskip .65cm 
\pagenumbering{arabic}

\title[Quantized Weyl Algebras]{SIMPLE MODULES OVER QUANTIZED WEYL ALGEBRAS\\ AT ROOTS OF UNITY}
\author [Sanu Bera~ And~ Snehashis Mukherjee]{Sanu Bera$^1$ \and Snehashis Mukherjee$^2$}

\address {\newline Sanu Bera$^1$ and Snehashis Mukherjee$^2$
\newline School of Mathematical Sciences, \newline Ramakrishna Mission Vivekananda Educational and Research Institute (rkmveri), \newline Belur Math, Howrah, Box: 711202, West Bengal, India.
 }
\email{\href{mailto:sanubera6575@gmail.com}{sanubera6575@gmail.com$^1$};\href{mailto:tutunsnehashis@gmail.com}{tutunsnehashis@gmail.com$^2$}}

\subjclass[2020]{16D60, 16D70, 16S85}
\keywords{Quantum Weyl Algebra, Simple modules, Polynomial Identity algebra}
\begin{abstract}
In this article the simple modules over the rank-two quantized Weyl algebras at roots of unity over an algebraically closed field are classified.
\end{abstract}
\maketitle
\section{{Introduction}}
Let $\mathbb{K}$ be a field and $\mathbb{K}^*$ denotes the multiplicative group of non-zero elements of $\mathbb{K}$ and $n$ be a positive integer. Let $\Lambda:=\left(\lambda_{ij}\right)_{n \times n}$ be an  $n \times n$ multiplicatively antisymmetric matrix over $\mathbb{K}$, that is, $ \lambda_{ii}=1$ and $\lambda_{ij}\lambda_{ji}=1$ for all $1 \leq i,j\leq n$ and let $\underline{q}:=(q_1,\cdots,q_n)$ be an $n$-tuple elements of $\mathbb{K}\setminus\{0,1\}$.
\begin{defi} (Quantum Weyl Algebra)
Given such $\Lambda$ and $\underline{q}$, the multiparameter quantum Weyl algebra ${A_n^{\underline{q},\Lambda}}$  of rank $n$ is the algebra generated over the field $\mathbb{K}$ by the variables $x_1,y_1,\cdots ,x_n,y_n$ subject to the following relations: 
\[\begin{array}{ll}
x_ix_j=q_i\lambda_{ij}x_jx_i,\ \  x_iy_j=\lambda_{ij}^{-1}y_jx_i,& 1\leq i<j\leq n\\
y_iy_j=\lambda_{ij}y_jy_i,\ \   y_ix_j=q_i^{-1}\lambda_{ij}^{-1}x_jy_i,& 1\leq i<j\leq n\\
x_iy_i-q_iy_ix_i=1+\sum_{k=1}^{i-1}(q_k-1)y_kx_k & 1\leq i\leq n.
\end{array}\]
\end{defi}
The quantum Weyl algebra ${A_n^{\underline{q},\Lambda}}$, which arises from the work of Maltsiniotis on noncommutative differential calculus \cite{gm}, has been extensively studied in \cite{ad,gz,krg,daj}. Another family of multiparameter quantum Weyl algebras has been studied in the literature including \cite{aj}, which has more symmetric defining relations than those of ${{A}_n^{\underline{q},\Lambda}}$. 
\begin{defi} (Alternative Quantum Weyl Algebra)
Given such $\Lambda$ and $\underline{q}$, the alternative multiparameter quantum Weyl algebra ${\mathcal{A}_n^{\underline{q},\Lambda}}$ of rank $n$ is the algebra generated over the field $\mathbb{K}$ by the variables $x_1,y_1,\cdots ,x_n,y_n$ subject to the following relations:
\[\begin{array}{ll}
x_ix_j=\lambda_{ij}x_jx_i,\ \  x_iy_j=\lambda_{ij}^{-1}y_jx_i,& 1\leq i<j\leq n\\
y_iy_j=\lambda_{ij}y_jy_i,\ \   y_ix_j=\lambda_{ij}^{-1}x_jy_i,& 1\leq i<j\leq n\\
x_iy_i-q_iy_ix_i=1 & 1\leq i\leq n.
\end{array}\]
\end{defi}
Here the adjective `quantized' will refer to both versions of quantum Weyl algebras simultaneously. We note that the algebras ${A_n^{\underline{q},\Lambda}}$ and ${\mathcal{A}_n^{\underline{q},\Lambda}}$ are closely related in many aspects \cite{aj, daj}. Both algebras have some iterated skew polynomial presentation twisted by automorphisms and derivations and have a common localization (see section 2). Analogous to the Weyl algebra, these quantized versions can be viewed as algebras of partial $q$-difference operators on quantum affine spaces (see \cite{aj}). The prime spectrum, the automorphism group, and the isomorphism problem for both versions of quantum Weyl algebras were studied in \cite{krg,lr,gh2,aj,xt}. Most of these results concern the generic case when the algebras are not polynomial identity. In this article, we shall focus on these algebras when they are PI-algebras.
\par The quantum Weyl algebras have appeared in numerous works in mathematics and physics, including deformation theory, knot theory, category theory, and quantum mechanics to name a few. In quantum mechanics, the algebra of observables is noncommutative and the objects which play the role of points are the irreducible representations of the algebra of observables. Hence, it is natural to understand the irreducible representations of quantum Weyl algebras.   
\par For an infinite dimensional noncommutative algebra, classifying its simple modules is a very difficult problem, in general. When $n=1$, both quantum Weyl algebras coincide with rank-one quantum Weyl algebra $A_1^{q}$ with generators $x,y$ and relation $xy-qyx=1$. If $q$ is a root of unity, all irreducible representations of $A_1^{q}$ are finite-dimensional and were first classified in \cite{dgo,dj,bav} and later explicitly described them up to equivalence in two research projects directed by E. Letzter \cite{blt} and by L. Wang \cite{he} via studying the matrix solutions $(X,Y)$ of the equation $xy-qyx=1$. However, to date, no classification of simple modules over the quantized Weyl algebras of rank two onward has appeared in the literature. The purpose of this paper is to classify simple modules over the rank-two quantized Weyl algebras at roots of unity i.e., in the case when they are PI.
\subsection*{Assumptions} 
For $n=2$ we will use $\lambda=\lambda_{12}$ in the definition of the quantized Weyl algebras ${A_2^{\underline{q},\Lambda}}$ and ${\mathcal{A}_2^{\underline{q},\Lambda}}$. Throughout this article we shall assume the following assumption on the multiparameters $\lambda$ and $\underline{q}$ in the root of unity setting:
\begin{equation}\label{asm}
\begin{minipage}{0.9\textwidth}
\begin{itemize}
    \item $q_i$ is primitive $l_i$-th roots of unity for all $i=1,2$ and 
    \item $\lambda$ is $l_1$-th roots of unity.
    \end{itemize}
\end{minipage}\tag{*}
\end{equation}
These assumptions are satisfied in the important uniparameter case when $q_1=q_2$ and $\lambda_{12}=1$. Under this assumption the multiplicative group $\Gamma$ generated by the multiparameters $q_1,q_2$ and $\lambda$ is a cyclic group of order $l=\lcmu(l_1,l_2)$. Throughout this paper $\mathbb{K}$ is an algebraically closed field of arbitrary characteristics and all modules are the right modules.
\subsection*{Arrangement:} 
The paper is organized as follows. In Section $2$ we review some necessary facts for quantized Weyl algebras and discuss the theory of Polynomial Identity algebras to comment about the $\mathbb{K}$-dimension of the simple modules over such algebras. In Section $3$ an explicit expression of PI-degree for such quantized algebras is computed under the assumption (\ref{asm}) on multiparameter. Finally, in Section $4$ and Section $5$, the simple modules over the quantized Weyl algebras are classified.
\section{\bf{Preliminaries}}
In this section, we recall some important facts for quantized Weyl algebras that we shall be applying to compute the PI degree and classify simple modules. 
\subsection{\bf{Quantum Weyl Algebra}}
Here we recall some necessary facts for the quantum Weyl algebra in particular when $n=2$. The algebra ${{A}_2^{\underline{q},\Lambda}}$ has an iterated skew polynomial presentation with respect to the ordering of variables $y_1,x_1,y_2,x_2$ of the form: 
$$\mathbb{K}[y_1][x_1,\tau_1,\delta_1][y_2,\sigma_2][x_2,\tau_2,\delta_2]$$
where the $\tau_1,\tau_2$ and $\sigma_{1}$ are $\mathbb{K}$-linear automorphisms and the $\delta_j$ are $\mathbb{K}$-linear $\tau_j$-derivations  such that
\[\tau_1(y_1)=q_1 y_1,\ \delta_1(y_1)=1,\ \sigma_2(y_1)=\lambda^{-1}y_1,\ \sigma_2(x_1)=\lambda x_1\]
\[\tau_2(y_1)=q_1\lambda y_1,\ \tau_2(x_1)=(q_1\lambda)^{-1} x_1,\ \tau_2(y_2)=q_2y_2,\]
\[\delta_2(x_1)=\delta_2(y_1)=0,\ \delta_2(y_2)=1+(q_1-1)y_1x_1\]
This observation along with the skew polynomial version of the Hilbert Basis Theorem (cf. \cite[Theorem 2.9]{mcr}) yields that the algebra ${A_2^{\underline{q},\Lambda}}$ is an affine noetherian domain and has a $\mathbb{K}$-basis $\{y_1^{a_1}x_1^{b_1}y_2^{a_2}x_2^{b_2}:a_i,b_i\in \mathbb{Z}_{\geq 0}\}$, see \cite[1.9]{aj}. 
\par Let us define $z_0=1$ and $z_i=x_iy_i-y_ix_i$ for $i=1,2$. Using the defining relations of the algebra ${{A}_2^{\underline{q},\Lambda}}$, one can easily verified the following: 
\begin{lemm}\emph{(\cite[1.4]{aj})}\label{crq}
Direct computations yield the following commutation relations and identities.
\begin{itemize}
    \item $z_2=1+(q_1-1)y_1x_1+(q_2-1)y_2x_2=z_1+(q_2-1)y_2x_2$ and $x_2y_2-q_2y_2x_2=z_1$.
    \item $z_ix_j=q_j^{-1}x_jz_i,z_iy_j=q_jy_jz_i,~1\leq j\leq i\leq 2$ and $z_1x_2=x_2z_1,z_1y_2=y_2z_1,z_1z_2=z_2z_1$.
    \item $x_i^ny_i=q_i^ny_ix_i^n+\left(1+q_i+\cdots+q_i^{n-1}\right)z_{i-1}x_i^{n-1}$ and \\
 $x_iy_i^n=q_i^ny_i^nx_i+\left(1+q_i+\cdots+q_i^{n-1}\right)z_{i-1}y_i^{n-1}$ for $i=1,2$.
\end{itemize}
\end{lemm}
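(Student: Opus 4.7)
The plan is to verify each of the three bullet points by direct manipulation of the defining relations, with no machinery beyond rewriting. First I would dispose of the identities in the first bullet. Starting from $z_2 = x_2y_2 - y_2x_2$ and substituting $x_2y_2 = q_2y_2x_2 + 1 + (q_1-1)y_1x_1$ (the $i=2$ relation) immediately yields $z_2 = 1 + (q_1-1)y_1x_1 + (q_2-1)y_2x_2$. The same trick with the $i=1$ relation gives $z_1 = 1 + (q_1-1)y_1x_1$, whence $z_2 = z_1 + (q_2-1)y_2x_2$ and $x_2y_2 - q_2y_2x_2 = z_1$ drop out.

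Next I would establish the commutation relations in the second bullet by using these explicit polynomial expressions for $z_1, z_2$ and the cross-relations of the algebra. For instance, to show $z_1 x_1 = q_1^{-1} x_1 z_1$, I would expand $z_1 x_1 = x_1 + (q_1-1)y_1 x_1^2$ and independently expand $q_1^{-1} x_1 z_1$ using $x_1 y_1 = q_1 y_1 x_1 + 1$, then compare. The relations involving indices $j < i$ (namely $z_2 x_1 = q_1^{-1} x_1 z_2$ and $z_2 y_1 = q_1 y_1 z_2$) follow by writing $z_2 = z_1 + (q_2-1)y_2 x_2$ and invoking the cross-relations $y_2 x_1 = \lambda x_1 y_2$ and $x_2 x_1 = (q_1\lambda)^{-1} x_1 x_2$ (and similarly for $y_1$); the $\lambda$'s cancel, leaving exactly $q_1^{-1}$. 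The facts that $z_1$ commutes with $x_2, y_2$ reduce, after substituting the formula for $z_1$, to the cross-relations between indices $1$ and $2$, and the commutation $z_1 z_2 = z_2 z_1$ is then automatic since $z_2 \in z_1 + (q_2-1)y_2 x_2 \cdot 1$ and $z_1$ commutes with each factor appearing in $z_2$.

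Finally, the power identities in the third bullet I would prove by induction on $n$. The base case $n=1$ is exactly the defining relation rewritten as $x_iy_i = q_i y_i x_i + z_{i-1}$. For the inductive step, multiplying $x_i^k y_i = q_i^k y_i x_i^k + (1 + q_i + \cdots + q_i^{k-1}) z_{i-1} x_i^{k-1}$ on the left by $x_i$ and applying the base case to $x_i y_i$, one picks up a term $q_i^k z_{i-1} x_i^k$ which combines with $(1 + \cdots + q_i^{k-1}) x_i z_{i-1} x_i^{k-1}$; here I use the commutation $x_i z_{i-1} = z_{i-1} x_i$ established above (trivially when $i=1$ since $z_0 = 1$, and by the previous bullet when $i=2$) to collapse the geometric series to $(1 + q_i + \cdots + q_i^k)$. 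The second identity $x_i y_i^n = q_i^n y_i^n x_i + (1 + q_i + \cdots + q_i^{n-1}) z_{i-1} y_i^{n-1}$ is proved by the mirror induction, now requiring $y_i z_{i-1} = z_{i-1} y_i$ which again follows from the second bullet.

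No single step looks like a genuine obstacle; the main care is in tracking the scalars in the mixed case $z_2 x_1, z_2 y_1$ to see the $\lambda$'s cancel, and in confirming that $x_i$ (respectively $y_i$) commutes with $z_{i-1}$ so that the induction step collapses cleanly.
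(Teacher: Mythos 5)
Your proposal is correct: the paper gives no proof of this lemma, simply asserting that it follows by ``direct computations'' from the defining relations (citing \cite[1.4]{aj}), and your verification---deriving $z_1=1+(q_1-1)y_1x_1$ and $z_2=z_1+(q_2-1)y_2x_2$ first, checking the commutation relations by substitution (with the $\lambda$'s cancelling as you describe in the mixed cases), and proving the power identities by induction using $x_iy_i=q_iy_ix_i+z_{i-1}$ together with the fact that $z_{i-1}$ commutes with $x_i$ and $y_i$---is exactly the computation being alluded to. All the scalar bookkeeping in your outline checks out.
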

These commutation relations are going to play a crucial role throughout the article. Recall that a nonzero element $x$ of an algebra ${A}$ is called a normal element if $x{A}={A}x$. Clearly the elements $z_1,z_2$ are normal in ${A_2^{\underline{q},\Lambda}}$.
\subsection{Alternative Quantum Weyl Algebra} Here we also recall some known facts for rank-two alternative quantum Weyl algebra which are analogous to quantum Weyl algebra. The algebra ${\mathcal{A}_2^{\underline{q},\Lambda}}$ has an iterated skew polynomial presentation with respect to the ordering of variables $y_1,x_1,y_2,x_2$ of the form:
$$\mathbb{K}[y_1][x_1,\tau_1,\delta_1][y_2,\sigma_2][x_2,\tau_2,\delta_2]$$
where the $\tau_1,\tau_2$ and $\sigma_{1}$ are $\mathbb{K}$-linear automorphisms and the $\delta_j$ are $\mathbb{K}$-linear $\tau_j$-derivations  such that
\[\tau_1(y_1)=q_1 y_1,\ \delta_1(y_1)=1,\ \sigma_2(y_1)=\lambda^{-1}y_1,\ \sigma_2(x_1)=\lambda x_1\]
\[\tau_2(y_1)=\lambda y_1,\ \tau_2(x_1)=\lambda^{-1} x_1,\ \tau_2(y_2)=q_2y_2,\ \delta_2(x_1)=\delta_2(y_1)=0,\ \delta_2(y_2)=1.\]
This observation yields that the algebra ${\mathcal{A}_2^{\underline{q},\Lambda}}$ is an affine noetherian domain and has a $\mathbb{K}$-basis $\{y_1^{a_1}x_1^{b_1}y_2^{a_2}x_2^{b_2}:a_i,b_i\in \mathbb{Z}_{\geq 0}\}$. 
\par Let us also define $z_i=x_iy_i-y_ix_i$ for $i=1,2$. As before, these elements will play a crucial role. The following results are analogous to Lemma \ref{crq}.
\begin{lemm}\emph{(\cite[1.5]{aj})}\label{craq} The following commutation relations and identities hold in ${\mathcal{A}_2^{\underline{q},\Lambda}}$:
\begin{itemize}
    \item $z_i=1+(q_i-1)y_ix_i$ for $i=1,2$ and $z_1z_2=z_2z_1$.
    \item $z_ix_i=q_i^{-1}x_iz_i,z_iy_i=q_iy_iz_i$ and $z_ix_j=x_jz_i,z_iy_j=y_jz_i,~1\leq i\neq j\leq 2$.
    \item $x_i^ny_i=q_i^ny_ix_i^n+\left(1+q_i+\cdots+q_i^{n-1}\right)x_i^{n-1}$ and \\
 $x_iy_i^n=q_i^ny_i^nx_i+\left(1+q_i+\cdots+q_i^{n-1}\right)y_i^{n-1}$ for $i=1,2$.
\end{itemize}
\end{lemm}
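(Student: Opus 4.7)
The plan is to verify the three bullets of Lemma \ref{craq} by direct computation from the defining relations of $\mathcal{A}_2^{\underline{q},\Lambda}$, proceeding in the order stated so that each claim can be used to streamline the next.

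For the first bullet, I would simply rewrite $z_i=x_iy_i-y_ix_i$ using the defining relation $x_iy_i-q_iy_ix_i=1$, which gives $z_i=1+(q_i-1)y_ix_i$ at once. The commutation $z_1z_2=z_2z_1$ I would postpone: it becomes an immediate consequence of the cross-index relations in the second bullet, once we know that $z_1$ commutes with both $x_2$ and $y_2$ (and hence with any polynomial in them, including $z_2$).

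For the second bullet, I would substitute $z_i=1+(q_i-1)y_ix_i$ and push $x_i$ or $y_i$ through the term $y_ix_i$ using $x_iy_i=1+q_iy_ix_i$. For instance, $z_iy_i=y_i+(q_i-1)y_ix_iy_i$, and replacing $x_iy_i$ once yields a scalar multiple of $y_i$ together with $q_i(q_i-1)y_i^2x_i$, which collects cleanly into $q_iy_iz_i$; the identity $z_ix_i=q_i^{-1}x_iz_i$ is symmetric. For the cross-index commutation $z_ix_j=x_jz_i$ and $z_iy_j=y_jz_i$ with $i\neq j$, the task reduces to checking that the monomial $y_ix_i$ commutes with $x_j$ and with $y_j$. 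The defining relations are designed exactly so that the scalar $\lambda_{ij}$ picked up when $y_i$ passes over $x_j$ (or $y_j$) is cancelled by the $\lambda_{ij}^{-1}$ picked up when $x_i$ passes over the same element; this cancellation, which is subtler in the original $A_2^{\underline{q},\Lambda}$ (where the extra summand $(q_1-1)y_1x_1$ enters the relation $x_2y_2-q_2y_2x_2$), is what makes $z_i$ fully commute with the other pair in the alternative version.

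For the third bullet, I would prove both identities by induction on $n$; the base case $n=1$ is the defining relation $x_iy_i-q_iy_ix_i=1$. For the inductive step of the first identity, I would write $x_i^{n+1}y_i=x_i\cdot(x_i^ny_i)$, apply the inductive hypothesis, and then rewrite $x_iy_ix_i^n$ via $x_iy_i=1+q_iy_ix_i$; the geometric sum $1+q_i+\cdots+q_i^{n-1}$ absorbs the newly produced $q_i^n$ term to give $1+q_i+\cdots+q_i^n$, closing the induction. The companion identity $x_iy_i^n=q_i^ny_i^nx_i+(1+q_i+\cdots+q_i^{n-1})y_i^{n-1}$ is obtained symmetrically by peeling off one $y_i$ on the right. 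None of the three parts poses a genuine obstacle; the only care needed is bookkeeping the multiparameter scalars and the geometric sums correctly, with the mildly delicate point being the $\lambda_{ij}$-cancellation that yields the full centrality of $z_i$ in the other index.
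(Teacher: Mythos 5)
Your proposal is correct and matches the paper's treatment: the paper cites this lemma from Akhavizadegan--Jordan and regards it as following from direct computation with the defining relations, which is exactly the verification you carry out (first bullet from $x_iy_i=1+q_iy_ix_i$, cross-index commutation via the $\lambda_{ij}$-cancellation, third bullet by induction on $n$). No gaps.
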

Note that the elements $z_1$ and $z_2$ are normal in ${\mathcal{A}_2^{\underline{q},\Lambda}}$ with more simpler commutation relations than those for ${{A}_2^{\underline{q},\Lambda}}$. 
\subsection{\bf{A Common Localization}} Here we recall a result from \cite{aj} which states that the localizations of both versions of quantum Weyl algebras are isomorphic. The multiplicative set generated by the $z_1$ and $z_2$ consists of normal elements. Hence it is an Ore set and thus we may form the corresponding localization. We denote by ${{B}_2^{\underline{q},\Lambda}}$ the localization of ${{A}_2^{\underline{q},\Lambda}}$ by inverting the normal elements $z_1,z_2$. We denote the corresponding localization of ${\mathcal{A}_2^{\underline{q},\Lambda}}$ by ${\mathcal{B}_2^{\underline{q},\Lambda}}$.
\begin{theo}\emph{(\cite[1.7]{aj})}
The map $\theta:{\mathcal{B}_2^{\underline{q},\Lambda}}\longrightarrow{{B}_2^{\underline{q},\Lambda}}$ defined by $y_i\mapsto y_i, x_i\mapsto z_{i-1}^{-1}x_i$ for $i=1,2$ with $z_0=1$
is an isomorphism of $\mathbb{K}$-algebras.
\end{theo}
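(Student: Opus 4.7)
The plan is to show that $\theta$ is a well-defined $\mathbb{K}$-algebra homomorphism and then exhibit an explicit two-sided inverse $\psi$. Throughout, the normality relations supplied by Lemma~\ref{crq} and Lemma~\ref{craq} will do essentially all of the work.

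First I would verify that setting $X_i := z_{i-1}^{-1} x_i$ and $Y_i := y_i$ in ${B_2^{\underline{q},\Lambda}}$ gives elements satisfying the defining relations of ${\mathcal{A}_2^{\underline{q},\Lambda}}$. The relations not involving $X_2$ are immediate because $X_1 = x_1$ (as $z_0 = 1$) and ${A_2^{\underline{q},\Lambda}}$ shares the relation $x_1 y_1 - q_1 y_1 x_1 = 1$. For the relations involving $X_2 = z_1^{-1} x_2$ I would repeatedly invoke the commutation identities $z_1 x_2 = x_2 z_1$, $z_1 y_2 = y_2 z_1$, $z_1 x_1 = q_1^{-1} x_1 z_1$ and $z_1 y_1 = q_1 y_1 z_1$ from Lemma~\ref{crq} to slide $z_1^{-1}$ past the other generators with the correct scalar corrections. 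The key nontrivial check is the commutator relation $X_2 Y_2 - q_2 Y_2 X_2 = 1$: using $z_1 y_2 = y_2 z_1$ together with the identity $x_2 y_2 - q_2 y_2 x_2 = z_1$ from Lemma~\ref{crq}, it collapses to $z_1^{-1}(x_2 y_2 - q_2 y_2 x_2) = z_1^{-1} z_1 = 1$.

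Next, to extend $\theta$ from ${\mathcal{A}_2^{\underline{q},\Lambda}}$ to its localization ${\mathcal{B}_2^{\underline{q},\Lambda}}$, I would check that the images of the normal elements $z_1, z_2 \in {\mathcal{A}_2^{\underline{q},\Lambda}}$ are units in ${B_2^{\underline{q},\Lambda}}$. Using $z_i = 1 + (q_i - 1) y_i x_i$ from Lemma~\ref{craq}, direct substitution yields $\theta(z_1) = z_1$ and $\theta(z_2) = z_1^{-1} z_2$, both invertible by construction of ${B_2^{\underline{q},\Lambda}}$. For the inverse I would define $\psi : {B_2^{\underline{q},\Lambda}} \to {\mathcal{B}_2^{\underline{q},\Lambda}}$ on generators by $\psi(y_i) = y_i$, $\psi(x_1) = x_1$, and $\psi(x_2) = z_1 x_2$, and carry out the parallel verification. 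The only genuinely nontrivial ${A_2^{\underline{q},\Lambda}}$-relation is $x_2 y_2 - q_2 y_2 x_2 = 1 + (q_1 - 1) y_1 x_1$, which under $\psi$ becomes $z_1(x_2 y_2 - q_2 y_2 x_2) = z_1 \cdot 1 = 1 + (q_1 - 1) y_1 x_1$, using the simpler commutator relation of ${\mathcal{A}_2^{\underline{q},\Lambda}}$. A short calculation then gives $\psi(z_1) = z_1$ and $\psi(z_2) = z_1 z_2$, both units in ${\mathcal{B}_2^{\underline{q},\Lambda}}$, so $\psi$ extends to the localization. Checking $\theta \circ \psi = \mathrm{id}$ and $\psi \circ \theta = \mathrm{id}$ on generators is then immediate: for instance $\theta(\psi(x_2)) = z_1 \cdot z_1^{-1} x_2 = x_2$ and $\psi(\theta(x_2)) = z_1^{-1} \cdot z_1 x_2 = x_2$, while the remaining generators are fixed trivially.

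The main obstacle is really just bookkeeping: one must track scalar factors carefully when sliding $z_1$ and $z_1^{-1}$ across $x_1, y_1, x_2, y_2$. The conceptual content is that the rescaling $x_i \mapsto z_{i-1}^{-1} x_i$ is precisely the renormalization absorbing the inductive correction term $\sum_{k=1}^{i-1} (q_k - 1) y_k x_k$ appearing in the commutator relation of ${A_2^{\underline{q},\Lambda}}$ into the cleaner commutator relation $x_i y_i - q_i y_i x_i = 1$ of ${\mathcal{A}_2^{\underline{q},\Lambda}}$; once this pattern is spotted, no further ideas are needed.
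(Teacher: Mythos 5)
Your argument is correct: the bookkeeping all checks out (for instance $x_1z_1^{-1}=q_1^{-1}z_1^{-1}x_1$ and $y_1z_1^{-1}=q_1z_1^{-1}y_1$ from Lemma \ref{crq} convert the relations $x_1x_2=q_1\lambda x_2x_1$ and $y_1x_2=q_1^{-1}\lambda^{-1}x_2y_1$ of ${A_2^{\underline{q},\Lambda}}$ into the $\lambda^{\pm1}$-relations of ${\mathcal{A}_2^{\underline{q},\Lambda}}$, the commutator relation collapses via $x_2y_2-q_2y_2x_2=z_1$ exactly as you say, and $\theta(z_2)=z_1^{-1}z_2$, $\psi(z_2)=z_1z_2$ being units is what lets both maps pass to the localizations). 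The paper itself offers no proof of this statement --- it is quoted from \cite[1.7]{aj} --- and your direct verification by checking the defining relations and exhibiting the explicit inverse is essentially the argument given in that reference.
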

The above theorem yields an important link between the simple modules over both versions of quantum Weyl algebras. Note that the action of a normal element $x$ of an algebra $A$ on a simple $A$-module $M$ is either zero (if $M$ is $x$-torsion) or invertible (if $M$ is $x$-torsionfree).
\begin{prop}\label{conn}
There is a one to one correspondence between the simple $z_1,z_2$-torsionfree ${{A}_2^{\underline{q},\Lambda}}$-modules and the simple $z_1,z_2$-torsionfree ${\mathcal{A}_2^{\underline{q},\Lambda}}$-modules.
\end{prop}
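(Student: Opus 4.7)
The approach is to realize the bijection as a composition passing through the common localization, with the isomorphism $\theta$ of the preceding theorem serving as the bridge. First I would record the basic principle, already noted in the remark before the proposition: for any algebra $A$ and normal element $z \in A$, both $Mz$ and $\{m \in M : mz=0\}$ are $A$-submodules of a simple $A$-module $M$, so $z$ acts either as zero or as a bijection. Applied to $z_1, z_2$, this means a simple $z_1, z_2$-torsionfree ${A}_2^{\underline{q},\Lambda}$-module $M$ has $z_1, z_2$ acting invertibly, so the ${A}_2^{\underline{q},\Lambda}$-action extends uniquely to a ${B}_2^{\underline{q},\Lambda}$-action on the same underlying vector space $V$; since every ${B}_2^{\underline{q},\Lambda}$-submodule of $V$ is in particular an ${A}_2^{\underline{q},\Lambda}$-submodule, $V$ is simple as a ${B}_2^{\underline{q},\Lambda}$-module.

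Next I would establish the converse: restricting a simple ${B}_2^{\underline{q},\Lambda}$-module $V$ to ${A}_2^{\underline{q},\Lambda}$ yields a simple ${A}_2^{\underline{q},\Lambda}$-module (torsionfreeness over $z_1, z_2$ is automatic as they are units in ${B}_2^{\underline{q},\Lambda}$). Given a nonzero ${A}_2^{\underline{q},\Lambda}$-submodule $N \subseteq V$, I would promote $N$ to a ${B}_2^{\underline{q},\Lambda}$-submodule, forcing $N = V$ by simplicity. Under assumption (\ref{asm}) the algebra is a polynomial identity ring, so as discussed in the PI subsection of Section 2 all simple modules are finite-dimensional over $\mathbb{K}$, with dimension bounded by the PI-degree. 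Since each $z_i$ acts injectively on $V$ (being a unit) and $Nz_i \subseteq N$, finite-dimensionality of $N$ yields $Nz_i = N$, hence $Nz_i^{-1} = N$. Using the Ore presentation of ${B}_2^{\underline{q},\Lambda} = {A}_2^{\underline{q},\Lambda}[z_1^{-1}, z_2^{-1}]$, every element has the form $a z_1^{-p} z_2^{-q}$ with $a \in {A}_2^{\underline{q},\Lambda}$, so closure of $N$ under ${A}_2^{\underline{q},\Lambda}$ together with closure under $z_1^{-1}, z_2^{-1}$ implies closure under all of ${B}_2^{\underline{q},\Lambda}$.

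This establishes a bijection between simple $z_1, z_2$-torsionfree ${A}_2^{\underline{q},\Lambda}$-modules and simple ${B}_2^{\underline{q},\Lambda}$-modules. Applying the identical reasoning to the pair $({\mathcal{A}}_2^{\underline{q},\Lambda}, {\mathcal{B}}_2^{\underline{q},\Lambda})$, using the commutation relations of Lemma \ref{craq} and the normality of $z_1, z_2$ in ${\mathcal{A}}_2^{\underline{q},\Lambda}$, yields a bijection between simple $z_1, z_2$-torsionfree ${\mathcal{A}}_2^{\underline{q},\Lambda}$-modules and simple ${\mathcal{B}}_2^{\underline{q},\Lambda}$-modules. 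Finally, the algebra isomorphism $\theta : {\mathcal{B}}_2^{\underline{q},\Lambda} \to {B}_2^{\underline{q},\Lambda}$ induces a bijection between simple ${\mathcal{B}}_2^{\underline{q},\Lambda}$-modules and simple ${B}_2^{\underline{q},\Lambda}$-modules by pullback of scalars. Composing these three bijections yields the desired one-to-one correspondence.

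The main obstacle is the descent step, namely showing that the restriction of a simple ${B}_2^{\underline{q},\Lambda}$-module to ${A}_2^{\underline{q},\Lambda}$ remains simple. This assertion is false for general Ore localizations (the classical counterexample being $\mathbb{Q}$ as a $\mathbb{Z}$-module, which is simple as a $\mathbb{Q}$-module but has proper nontrivial $\mathbb{Z}$-submodules). What rescues the argument in our setting is the polynomial identity hypothesis forced by assumption (\ref{asm}), which makes all simple modules finite-dimensional and thereby enables the dimension-count argument that converts ${A}_2^{\underline{q},\Lambda}$-stability of $N$ under $z_i$ into full ${B}_2^{\underline{q},\Lambda}$-stability.
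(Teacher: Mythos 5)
Your proposal is correct and follows exactly the route the paper intends: pass through the common localization using the isomorphism $\theta$ of the preceding theorem, together with the observation that $z_1,z_2$ act invertibly on a $z_1,z_2$-torsionfree simple module. The paper leaves the proof implicit, so your explicit treatment of the descent step (simplicity of the restriction from ${B}_2^{\underline{q},\Lambda}$ back to ${A}_2^{\underline{q},\Lambda}$, rescued by finite-dimensionality from the PI hypothesis) is a welcome filling-in of a detail the paper glosses over, not a departure from its argument.
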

\subsection{\bf{Polynomial Identity Algebras}}
Here we wish to establish some facts for quantized Weyl algebras at the roots of unity related to polynomial identity algebras. The following result provides a necessary and sufficient condition for quantized Weyl algebras to be PI.
\begin{prop} \label{finite}
The quantum Weyl algebra ${A_2^{\underline{q},\Lambda}}$ (respectively,  ${\mathcal{A}_2^{\underline{q},\Lambda}}$) is a PI algebra if and only if the parameters $q_1,q_2$ and $\lambda$ are roots of unity. 
\end{prop}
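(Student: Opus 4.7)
The plan is to prove both directions by reducing to the known PI criterion for quantum planes and for rank-one quantum Weyl algebras, combined with a direct application of the finite-module-over-center criterion, together with the common localization of Section~2.3 to handle a residual difficulty in the non-alternative case.

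For the sufficient direction, suppose $q_1,q_2,\lambda$ are roots of unity and let $N$ be a common multiple of their orders, so that $q_1^N=q_2^N=\lambda^N=1$. I would directly exhibit $x_i^N,y_i^N$ as central elements in both algebras. The braiding relations raised to the $N$-th power collapse using $\lambda^N=1$; the $(x_i,y_i)$-commutators are controlled by the last identities of Lemmas~\ref{crq} and~\ref{craq} with $n=N$, whose leading coefficient $1+q_i+\cdots+q_i^{N-1}$ vanishes because $q_i$ is a root of unity whose order divides $N$, giving $x_i^Ny_i=y_ix_i^N$ and $y_i^Nx_i=x_iy_i^N$. The PBW bases then realise both algebras as free modules of rank $N^4$ over the central polynomial ring $\mathbb{K}[x_1^N,y_1^N,x_2^N,y_2^N]$, and a finitely generated module over a commutative ring is PI.

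For the necessary direction I would exploit that subalgebras of PI algebras are PI. In either algebra, by the PBW basis, the subalgebra generated by $\{x_1,y_1\}$ is isomorphic to the rank-one quantum Weyl algebra $A_1^{q_1}$, which is PI iff $q_1$ is a root of unity (otherwise it is a simple noetherian domain with trivial center and infinite $\mathbb{K}$-dimension, incompatible with Kaplansky's theorem). In ${\mathcal{A}_2^{\underline{q},\Lambda}}$ the subalgebras on $\{x_2,y_2\}$ and $\{x_1,x_2\}$ are $A_1^{q_2}$ and the quantum plane with parameter $\lambda$, forcing $q_2$ and $\lambda$ to be roots of unity. In ${A_2^{\underline{q},\Lambda}}$ the subalgebra on $\{x_1,x_2\}$ is the quantum plane with parameter $q_1\lambda$, so combined with $q_1$ being a root of unity this yields $\lambda$.

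The main obstacle is the appearance of $z_1=1+(q_1-1)y_1x_1$ on the right-hand side of the relation $x_2y_2-q_2y_2x_2=z_1$ in ${A_2^{\underline{q},\Lambda}}$: the subalgebra on $\{x_2,y_2\}$ is no longer a rank-one quantum Weyl algebra, so the previous argument does not deliver $q_2$ directly. I would bypass this by routing through the common localization of Section~2.3: if ${A_2^{\underline{q},\Lambda}}$ is PI then so is its Ore localization ${B_2^{\underline{q},\Lambda}}$ at the multiplicative set generated by the normal elements $z_1,z_2$, hence so is the subalgebra ${\mathcal{A}_2^{\underline{q},\Lambda}}\subseteq{\mathcal{B}_2^{\underline{q},\Lambda}}\cong{B_2^{\underline{q},\Lambda}}$, and the previously handled case for ${\mathcal{A}_2^{\underline{q},\Lambda}}$ supplies the missing fact that $q_2$ is a root of unity.
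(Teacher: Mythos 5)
Your overall architecture matches the paper's (very terse) indicated proof: sufficiency by exhibiting the algebra as a finitely generated free module over the central commutative subalgebra generated by the $N$-th powers, and necessity by feeding suitable subalgebras into Kaplansky's theorem. The detour through the common localization to recover $q_2$ for ${A_2^{\underline{q},\Lambda}}$ is a legitimate and rather elegant way around the fact that $x_2y_2-q_2y_2x_2=z_1\neq 1$: if ${A_2^{\underline{q},\Lambda}}$ is PI then ${B_2^{\underline{q},\Lambda}}$ sits inside its Goldie quotient division ring, which is PI by Posner's theorem, and the isomorphism $\theta$ then transports the problem to ${\mathcal{A}_2^{\underline{q},\Lambda}}$, which you have already handled.

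There is, however, one genuine error in the necessity step. For $q$ not a root of unity the rank-one quantum Weyl algebra $A_1^{q}$ is \emph{not} a simple ring, so Kaplansky's theorem cannot be applied to it in the way you describe: the normal element $z=xy-yx=1+(q-1)yx$ generates a proper two-sided ideal (in fact $A_1^{q}/\langle z\rangle$ is a commutative Laurent polynomial ring, since $yx$ becomes the scalar $1/(1-q)$ there), so $A_1^{q}$ is neither simple nor obviously primitive. The repair is standard and uses an object already in your toolkit: Jordan's simple localization $B_1^{q}=A_1^{q}[z^{-1}]$ (reference \cite{daj} of the paper) is, for $q$ not a root of unity, a simple noetherian domain with center $\mathbb{K}$ and infinite $\mathbb{K}$-dimension; if $A_1^{q}$ were PI then so would be $B_1^{q}$, contradicting Kaplansky. (Equivalently, one can use that the subalgebra $\mathbb{K}\langle y,z\rangle\subseteq A_1^{q}$ is a quantum plane with parameter $q$, whose quantum torus localization is simple with center $\mathbb{K}$.) With this one correction the rest of your argument goes through.
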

The first part is a consequence of Kaplansky's Theorem (cf. \cite[ Theorem 13.3.8]{mcr}). For the converse part, use the fact that if an algebra is a finitely generated module over its center then it is PI. 
\par Kaplansky's Theorem has a striking consequence in the case of a prime affine PI algebra over an algebraically closed field. Let $A$ be a prime affine PI algebra over an algebraically closed field $\mathbb{K}$ and $V$ be a simple $A$-module. Then by \cite[Theorem I.13.5]{brg} we have $\dime_{\mathbb{K}}(V)\leq \pideg (A)$. This result provides the important link between the PI degree of a prime affine PI algebra over an algebraically closed field and its irreducible representations. Moreover, the upper bound PI-deg($A$) is in fact attained for such an algebra $A$ (cf. \cite[Lemma III.1.2]{brg}). 
\begin{rema}\label{fdb}
From the above discussion, it is quite clear that each simple ${A_2^{\underline{q},\Lambda}}$-module is finite-dimensional and can have dimension at most $\pideg( {A_2^{\underline{q},\Lambda}}$). The same is true for the algebra ${\mathcal{A}_2^{\underline{q},\Lambda}}$ also.
\end{rema}
In the following section, we will focus on computing the PI degrees of the concerned algebras.
\section{\bf{PI Degree for Quantized Weyl algebras}}
Let us first recall the assumption (\ref{asm}) on multiparameters. Then the multiplicative group $\Gamma$ generated by the multiparameters $q_1,q_2$ and $\lambda$ is a cyclic group of order $l=\lcmu(l_1,l_2)$ and hence the quantized Weyl algebras are PI algebras. In this section, we aim to compute an explicit expression of PI-degree for quantized Weyl algebras with the assumption (\ref{asm}). Here we will use the derivation erasing process \cite[Theorem 7]{lm2} and then a key technique for calculating PI degree of a quantum affine space \cite[Proposition 7.1]{di}. We denote the quantum affine space of rank $n$ by $\mathcal{O}_{\Lambda}(\mathbb{K}^n)$, the $\mathbb{K}$-algebra generated by the variables $x_1,\cdots ,x_n$ subject to the relations \[x_ix_j=\lambda_{ij}x_jx_i, \ \ \ \forall\ \ \ 1 \leq i,j\leq n.\]
\subsection{\bf{PI Degree for ${{A}_2^{\underline{q},\Lambda}}$}} First recall that the quantum Weyl algebra ${{A}_2^{\underline{q},\Lambda}}$ has an iterated skew polynomial algebra twisted by automorphisms and derivations (see subsection 2.1). With this, we are ready to use the derivation erasing independent of characteristics due to A. Leroy and J. Matczuk \cite{lm2}.\\
\textbf{Step 1:} (Derivation Erasing) A straightforward calculation shows that the twisting $\mathbb{K}$-linear maps $\tau_j$ and $\delta_j$ of ${{A}_2^{\underline{q},\Lambda}}$ satisfy the skew relations $\delta_j\tau_j=q_j\tau_j\delta_j$ $(q_j\neq 1)$ for $j=1,2$. Also one can check that all the hypothesis of derivation erasing process in \cite[Theorem 7]{lm2} is satisfied by the PI algebra ${{A}_2^{\underline{q},\Lambda}}$. Hence it follows that $\pideg {{A}_2^{\underline{q},\Lambda}}= \pideg \mathcal{O}_{\mathbf{q}}(\mathbb{K}^{4})$, where the $(4\times 4)$-matrix of relations $\mathbf{q}$ is
\begin{equation}\label{defma}
\mathbf{q}=\begin{pmatrix}
1&q_1^{-1}&\lambda &q_1^{-1}\lambda^{-1}\\
q_1&1&\lambda^{-1}&q_1\lambda\\
\lambda^{-1}&\lambda&1&q_2^{-1}\\
q_1\lambda& q_1^{-1}\lambda^{-1}&q_2&1
\end{pmatrix}.
\end{equation}
\noindent\textbf{Step 2:} (Integral matrix) Now we wish to form an integral matrix associated with $\mathbf{q}$ under the assumption (\ref{asm}). Let $q$ be a generator of the group $\Gamma$. Then we can choose $0<s_i<l$ with $s_i$ divides $l$ such that $\langle q_i\rangle=\langle q^{s_i}\rangle$ for $i=1,2$. Note that $\gcdi(s_1,s_2)=1$ as $s_1l_1=l=s_2l_2$. Therefore there exists integers $k_1,k_2$ and $k$ such that 
\[q_1=q^{s_1k_1},\ q_2=q^{s_2k_2},\ \lambda=q_1^k=q^{s_1k_1k}.\]
Also note that $\gcdi(k_1,l_1)=\gcdi(k_2,l_2)=1$. Thus the skew-symmetric integral matrix associated to $\mathbf{q}$ is
\[B=\begin{pmatrix}
0&-s_1k_1&s_1k_1k&-s_1k_1-s_1k_1k\\
s_1k_1&0&-s_1k_1k&s_1k_1+s_1k_1k\\
-s_1k_1k&s_1k_1k&0&-s_2k_2\\
s_1k_1+s_1k_1k&-s_1k_1-s_1k_1k&s_2k_2&0
\end{pmatrix}.\] One can easily check that this matrix is similar to an integral matrix 
\[C=\diagonal\left(\begin{pmatrix} 
0 & -s_1k_1 \\
s_1k_1 & 0
\end{pmatrix},\begin{pmatrix} 
0 & -s_2k_2 \\
s_2k_2 & 0
\end{pmatrix}\right)\] and hence they have the same invariant factors with $\ran(B)=\ran(C)=4$. \\
\textbf{Step 3:} (Invariant factors) Suppose that $h_1|h_1|h_2|h_2$ are the invariant factors for $C$ as well as for $B$. Now from the relations between invariant factors and determinantal divisors, it follows that
\[\begin{array}{ll}
h_1&=\text{first~determinantal~divisor}=\gcdi(s_1k_1,s_2k_2)\ \text{and}\\
h_2&=\displaystyle\frac{\text{fourth~determinantal~divisor}}{\text{third~determinantal~divisor}}\\
&=\displaystyle\frac{(s_1k_1s_2k_2)^2}{\gcdi(s_1k_1(s_2k_2)^2,(s_1k_1)^2s_2k_2)}=\lcmu(s_1k_1,s_2k_2).
\end{array}\]
In the next step we will use these expressions of $h_1$ and $h_2$. Note that $h_1h_2=s_1k_1s_2k_2$.\\
\noindent\textbf{Step 3:} (Final step) One of the key techniques for calculating the PI degree of a quantum affine space was first introduced in \cite[Proposition 7.1]{di} and later simplified this with respect to invariant factors in \cite[Lemma 5.7]{ar}. Therefore applying the result \cite[Lemma 5.7]{ar} in our context we have
\begin{equation}\label{pid}
  \pideg(\mathcal{O}_{\mathbf{q}}(\mathbb{K}^4))=\frac{l}{\gcdi(h_1,l)}\times \frac{l}{\gcdi(h_2,l)}.  
\end{equation}
Finally we claim that $\gcdi(h_1,l)=1$ and $\gcdi(h_2,l)=s_1s_2$. 
\par To prove the first claim suppose $\gcdi(h_1,l)=d$. Then $d$ divides $s_1k_1,s_2k_2$ and $l$. Now we compute
\[q_1^{\frac{l}{d}}=q^{\frac{s_1k_1l}{d}}=1\ \ \text{and}\ \ q_2^{\frac{l}{d}}=q^{\frac{s_2k_2l}{d}}=1.\] This implies that $l_1$ and $l_2$ both divides $\frac{l}{d}$ and hence $l=\lcmu(l_1,l_2)$ divides $\frac{l}{d}$. Thus we can conclude that $\gcdi(h_1,l)=1$.
\par We now prove the second claim using the first claim. One can simplify the following using $\gcdi(k_1,l_1)=\gcdi(k_2,l_2)=1$: 
\[\gcdi(h_2,l)=\gcdi(h_1h_2,l)=s_1s_2\gcdi\left(k_1k_2,\frac{l}{s_1s_2}\right)=s_1s_2.\] Therefore we can simplify the equation (\ref{pid}) with these claims as follows: \[\pideg {{A}_2^{\underline{q},\Lambda}}=\pideg(\mathcal{O}_{\mathbf{q}}(\mathbb{K}^4))=\frac{l^2}{s_1s_2}=l_1l_2.\]
Thus we have proved that
\begin{theo}\label{pidq}
The PI-degree of quantum Weyl algebra ${{A}_2^{\underline{q},\Lambda}}$ is $l_1l_2$ under the assumption (\ref{asm}) on multiparameters.
\end{theo}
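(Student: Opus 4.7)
The plan is to compute $\pideg A_2^{\underline{q},\Lambda}$ in two stages: first, exploit the iterated skew polynomial structure from Section~2.1 together with a derivation-erasing result to replace $A_2^{\underline{q},\Lambda}$ with a quantum affine space of the same PI-degree; second, compute the PI-degree of that quantum affine space via the invariant factors of its associated skew-symmetric integer matrix. For the reduction I would invoke the Leroy--Matczuk theorem \cite[Theorem 7]{lm2}, which applies because a short check gives $\delta_j \tau_j = q_j \tau_j \delta_j$ with $q_j \neq 1$ for $j=1,2$. Erasing the two derivations would produce $\mathcal{O}_{\mathbf{q}}(\mathbb{K}^4)$ with $\mathbf{q}$ the $4 \times 4$ multiplicatively skew-symmetric matrix whose entries are read off from the automorphisms $\tau_j$ and $\sigma_2$.

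Next, to exploit assumption (\ref{asm}), I would rewrite the parameters in terms of a single generator $q$ of the cyclic group $\Gamma$, which has order $l = \lcmu(l_1,l_2)$. Setting $s_i = l/l_i$, I can write $q_i = q^{s_i k_i}$ and $\lambda = q^{s_1 k_1 k}$ for suitable integers satisfying $\gcdi(k_i,l_i)=1$ and $\gcdi(s_1,s_2)=1$. This converts $\mathbf{q}$ into a skew-symmetric integer exponent matrix $B$. The key algebraic step is to observe that $B$ is $\mathbb{Z}$-similar to the block-diagonal matrix
\[
C = \diagonal\left(\begin{pmatrix} 0 & -s_1 k_1 \\ s_1 k_1 & 0 \end{pmatrix}, \begin{pmatrix} 0 & -s_2 k_2 \\ s_2 k_2 & 0 \end{pmatrix}\right),
\]
so $B$ and $C$ share both rank $4$ and the same invariant factors $h_1 \mid h_1 \mid h_2 \mid h_2$. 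Applying the determinantal-divisor formula to $C$ then yields $h_1 = \gcdi(s_1 k_1, s_2 k_2)$ and $h_2 = \lcmu(s_1 k_1, s_2 k_2)$.

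Finally, I would apply the formula of \cite[Lemma 5.7]{ar} (a reformulation of \cite[Proposition 7.1]{di}), giving $\pideg \mathcal{O}_{\mathbf{q}}(\mathbb{K}^4) = \frac{l}{\gcdi(h_1,l)} \cdot \frac{l}{\gcdi(h_2,l)}$, and then evaluate the two gcds. If $d = \gcdi(h_1,l)$, then $d \mid s_i k_i$ and $d \mid l$, so $q_i^{l/d} = 1$ forces $l_i \mid l/d$; taking the least common multiple gives $l \mid l/d$, hence $d = 1$. For the second gcd, the identity $h_1 h_2 = s_1 k_1 s_2 k_2$ together with $\gcdi(k_i, l_i) = 1$ simplifies $\gcdi(h_2, l) = \gcdi(h_1 h_2, l) = s_1 s_2$. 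Substituting these values gives $\pideg A_2^{\underline{q},\Lambda} = l^2/(s_1 s_2) = l_1 l_2$.

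The main obstacle I anticipate is the $\mathbb{Z}$-similarity between $B$ and $C$: while a pairwise block-decomposition along the variables $(x_1,y_1)$ versus $(x_2,y_2)$ is natural, the $\lambda$-twist couples the two blocks in $B$, so one must either exhibit an explicit unimodular change of basis or compute the Smith form of $B$ by hand to confirm the invariant factors directly.
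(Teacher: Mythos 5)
Your proposal is correct and follows essentially the same route as the paper: derivation erasing via Leroy--Matczuk to reduce to $\mathcal{O}_{\mathbf{q}}(\mathbb{K}^4)$, passage to the integer exponent matrix $B$ similar to the block-diagonal $C$, computation of the invariant factors $h_1=\gcdi(s_1k_1,s_2k_2)$ and $h_2=\lcmu(s_1k_1,s_2k_2)$, and evaluation of $\gcdi(h_1,l)=1$ and $\gcdi(h_2,l)=s_1s_2$ in the formula of \cite[Lemma 5.7]{ar}. The one point you flag as an anticipated obstacle --- exhibiting the unimodular equivalence between $B$ and $C$ --- is precisely the step the paper itself leaves as ``one can easily check,'' so supplying that verification would only strengthen the argument.
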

\subsection{PI Degree of ${\mathcal{A}_2^{\underline{q},\Lambda}}$} Here the computation of PI degree for ${\mathcal{A}_2^{\underline{q},\Lambda}}$ is analogous to that for ${{A}_2^{\underline{q},\Lambda}}$. First observe that the alternative quantum Weyl algebra ${\mathcal{A}_2^{\underline{q},\Lambda}}$ has an iterated skew polynomial presentation and satisfies all the hypotheses of derivation erasing process \cite[Theorem 7]{lm2}. Hence it follows that $\pideg {\mathcal{A}_2^{\underline{q},\Lambda}}= \pideg \mathcal{O}_{\mathbf{q}}(\mathbb{K}^{4})$, where the $(4\times 4)$-matrix of relations $\mathbf{q}$ is
\[\mathbf{q}=\begin{pmatrix}
1&q_1^{-1}&\lambda &\lambda^{-1}\\
q_1&1&\lambda^{-1}&\lambda\\
\lambda^{-1}&\lambda&1&q_2^{-1}\\
\lambda& \lambda^{-1}&q_2&1
\end{pmatrix}.\] Note that this matrix $\mathbf{q}$ slightly differs from the previous one for ${{A}_2^{\underline{q},\Lambda}}$. In fact, it can be easily checked that the skew-symmetric integral matrix associated with this $\mathbf{q}$ under the assumption (\ref{asm}) is in the same similarity class to the block diagonal matrix $C$ as mentioned in step $2$. Therefore repeating the above steps of the argument, we obtain the following:
\begin{theo}
The PI-degree of alternative quantum Weyl algebra ${\mathcal{A}_2^{\underline{q},\Lambda}}$ is $l_1l_2$ under the assumption (\ref{asm}) on multiparameters.
\end{theo}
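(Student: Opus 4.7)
The plan is to mimic exactly the three-step argument used for ${{A}_2^{\underline{q},\Lambda}}$ in Theorem \ref{pidq}, since the only thing that changes is the specific shape of the commutation matrix $\mathbf{q}$; the invariants end up being identical.

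First I would invoke \cite[Theorem 7]{lm2} (derivation erasing). The iterated skew polynomial presentation of ${\mathcal{A}_2^{\underline{q},\Lambda}}$ recalled in subsection 2.2 has automorphisms $\tau_j$ and $\tau_j$-derivations $\delta_j$ satisfying $\delta_j\tau_j=q_j\tau_j\delta_j$ with $q_j\neq 1$, so the hypotheses of that theorem hold. This reduces the problem to $\pideg \mathcal{O}_{\mathbf{q}}(\mathbb{K}^4)$ for the displayed $4\times 4$ matrix $\mathbf{q}$, where the essential difference from the ${{A}_2^{\underline{q},\Lambda}}$-case is that the entries involving $q_1\lambda$ are replaced by $\lambda$.

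Second, under the assumption (\ref{asm}) I would write $q_1=q^{s_1k_1}$, $q_2=q^{s_2k_2}$, $\lambda=q^{s_1k_1k}$ for a generator $q$ of $\Gamma$, with $s_il_i=l$, $\gcdi(s_1,s_2)=1$ and $\gcdi(k_i,l_i)=1$, exactly as in Step 2 for ${{A}_2^{\underline{q},\Lambda}}$. The associated skew-symmetric integer matrix is
\[
B'=\begin{pmatrix}
0 & -s_1k_1 & s_1k_1k & -s_1k_1k\\
s_1k_1 & 0 & -s_1k_1k & s_1k_1k\\
-s_1k_1k & s_1k_1k & 0 & -s_2k_2\\
s_1k_1k & -s_1k_1k & s_2k_2 & 0
\end{pmatrix}.
\]
The routine check to make here is that $B'$ lies in the same $\mathrm{GL}_4(\mathbb{Z})$-similarity class as
\[
C=\diagonal\!\left(\begin{pmatrix} 0 & -s_1k_1\\ s_1k_1 & 0\end{pmatrix},\begin{pmatrix} 0 & -s_2k_2\\ s_2k_2 & 0\end{pmatrix}\right);
\]
this is the analogue of the corresponding statement for $B$ in the previous subsection, achieved by elementary row/column operations (adding the second row/column to the first, and the fourth row/column to the third, to kill the $k$-terms in the top-right $2\times 2$ block). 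Consequently $B'$ and $C$ have the same invariant factors, and $\ran(B')=4$.

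Third, I would read off the invariant factors of $C$ as $h_1=\gcdi(s_1k_1,s_2k_2)$ and $h_2=\lcmu(s_1k_1,s_2k_2)$ from the determinantal divisors, exactly as in Step 3 before. Then I would apply \cite[Lemma 5.7]{ar} to obtain
\[
\pideg \mathcal{O}_{\mathbf{q}}(\mathbb{K}^4)=\frac{l}{\gcdi(h_1,l)}\cdot\frac{l}{\gcdi(h_2,l)},
\]
and the two claims $\gcdi(h_1,l)=1$ and $\gcdi(h_2,l)=s_1s_2$ go through verbatim: the first because $d\mid\gcdi(s_1k_1,s_2k_2,l)$ forces $l_1,l_2\mid l/d$, and the second because $\gcdi(k_1k_2,l/(s_1s_2))=1$ follows from $\gcdi(k_i,l_i)=1$. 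Substituting gives $\pideg \mathcal{A}_2^{\underline{q},\Lambda}=l^2/(s_1s_2)=l_1l_2$.

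The only genuine piece of work beyond quoting the previous subsection is the similarity $B'\sim C$; everything downstream is then identical. I do not anticipate a real obstacle, since the change of $\mathbf{q}$ affects only two entries of the matrix and those entries are precisely the ones that get eliminated by the clean-up row/column operations.
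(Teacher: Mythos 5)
Your proposal follows exactly the paper's route: the paper's own proof is a one-line remark that the integral matrix for this $\mathbf{q}$ lies in the same similarity class as $C$ and that Steps 1--3 of the ${{A}_2^{\underline{q},\Lambda}}$ computation then repeat verbatim, which is precisely what you spell out (your matrix $B'$ and the downstream claims are all correct). One small computational caveat: the operations you name (adding row/column $2$ to $1$ and row/column $4$ to $3$) leave a residual $\pm s_1k_1k$ in the $(2,4)$ and $(4,2)$ positions, so you need one further pair of operations (subtract $k$ times row/column $1$ from row/column $4$) to reach $C$ --- but the asserted equivalence $B'\sim C$ is true and the rest of the argument is unaffected.
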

\section{\bf{Simple modules over quantum Weyl algebra}}
In this section, we shall focus on classifying simple ${{A}_2^{\underline{q},\Lambda}}$-modules at roots of unity under the assumption (\ref{asm}). Let $M$ be a simple ${{A}_2^{\underline{q},\Lambda}}$-module. Then by Remark \ref{fdb} and Theorem \ref{pidq} we have the $\mathbb{K}$-dimension of $M$ is finite and bounded above by $l_1l_2$. Now it follows from the defining relations of the algebra and Lemma \ref{crq} that the elements $x_1^{l_1},y_1^{l_1},x_2^{l}$ and $y_2^{l}$ are central, where $l=\lcmu(l_1,l_2)$. By Schur's lemma, these central elements act as scalars on $M$. Also, we have mentioned that each $z_i$ is normal and satisfies some commutation relations with the generators $x_i$ and $y_i$ for $i=1,2$. Moreover $z_1$ commutes with $x_2,y_2$ and $z_2$ (see Lemma \ref{crq}).
\par Note that the action of a normal element $x$ of an algebra $A$ on a simple $A$-module $M$ is either zero (if $M$ is $x$-torsion) or invertible (if $M$ is $x$-torsionfree). Thus the action of each $z_i$ on $M$ is either zero or invertible. Now depending on this the classification reduces in the following cases: 
\subsection{Simple $z_1,z_2$-torsionfree ${{A}_2^{\underline{q},\Lambda}}$-modules}\label{sub1} 
Let $M$ be a $z_1,z_2$-torsionfree simple module over ${{A}_2^{\underline{q},\Lambda}}$. Since each of the elements 
\begin{equation}\label{cev1}
   x_1^{l_1},\ x_2^{l_2},\ y_1^{l_1},\ y_2^{l_2},\ z_1,\ z_2
\end{equation}
of ${{A}_2^{\underline{q},\Lambda}}$ commutes (by Lemma \ref{crq}), there is a common eigenvector $v$ in $M$ of the operators in (\ref{cev1}).
Take
\[
\begin{array}{ll}
vx_i^{l_i}=\alpha_iv, & i=1,2\\
vy_i^{l_i}=\beta_iv, & i=1,2\\
vz_i=\gamma_iv, & i=1,2
\end{array}
\]
for some $\alpha_i,\beta_i\in \mathbb{K}$ and $\gamma_i\in \mathbb{K}^*$. As the central elements act as scalars on $M$, then the operators $x_i$ (respectively, $y_i$) on $M$ is nilpotent if and only if $\alpha_i=0$ (respectively, $\beta_i=0$) for $i=1,2$. In the following we shall determine the structure of simple ${{A}_2^{\underline{q},\Lambda}}$-module $M$ according to the scalars $\alpha_i$ and $\beta_i$:\\
\subsubsection{\bf{First consider $\alpha_1\neq 0$ and $\alpha_2\neq 0$}}\label{base} The operators $x_1$ and $x_2$ are invertible on $M$. So the vectors 
\[e(a_1,a_2):=vx_2^{a_2}x_1^{a_1},\ 0\leq a_i\leq l_i-1\] in $M$ are non-zero. Suppose $M_1$ is the $\mathbb{K}$-subspace of $M$ spanned by these nonzero vectors. We now claim that $M_1$ is invariant under the action of ${{A}_2^{\underline{q},\Lambda}}$. In fact, after some straightforward calculations using the defining relations and the identities in Lemma \ref{crq}, we have
\[\begin{array}{l}
e(a_1,a_2)x_1=\begin{cases}
 e(a_1+1,a_2),& 0\leq a_1\leq l_1-2\\
 \alpha_1 e(0,a_2),& a_1=l_1-1
\end{cases}\\
e(a_1,a_2)x_2=\begin{cases}
 (q_1\lambda)^{a_1}e(a_1,a_2+1),& 0\leq a_2\leq l_2-2\\
 \alpha_2(q_1\lambda)^{a_1} e(a_1,0),& a_2=l_2-1
\end{cases}\\

e(a_1,a_2)y_1=\begin{cases}
 \displaystyle\frac{q_1^{a_1}\gamma_1-1}{q_1-1}e(a_1-1,a_2),& a_1\neq 0\\
 \alpha_1^{-1}\displaystyle\frac{\gamma_1-1}{q_1-1} e(l_1-1,a_2),& a_1=0
\end{cases}\\
e(a_1,a_2)y_2=\begin{cases}
 \lambda^{-a_1}\displaystyle\frac{q_2^{a_2}\gamma_2-\gamma_1}{q_2-1}e(a_1,a_2-1),& a_2\neq 0\\
 \alpha_2^{-1}\displaystyle\frac{\gamma_2-\gamma_1}{q_2-1}\lambda^{-a_1} e(a_1,l_2-1),& a_2=0
\end{cases}
\end{array}\] 
This shows that $M_1$ is a  ${{A}_2^{\underline{q},\Lambda}}$-submodule of $M$. Since $M$ is a simple module, therefore we have $M=M_1$. The simple ${{A}_2^{\underline{q},\Lambda}}$-module ${M}_1$ given above is denoted by $(M_1,\alpha_1,\alpha_2,\gamma_1,\gamma_2)$ for $\alpha_1,\alpha_2,\gamma_1,\gamma_2\in\mathbb{K}^*$. Now the following result deciphers the $\mathbb{K}$-dimension of $M_1$.\\  
\begin{theo}\label{gqa}
The simple ${{A}_2^{\underline{q},\Lambda}}$-module $(M_1,\alpha_1,\alpha_2,\gamma_1,\gamma_2)$ has dimension $l_1l_2$.
\end{theo}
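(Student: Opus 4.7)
The plan is to show that the $l_1 l_2$ spanning vectors $e(a_1,a_2) = v x_2^{a_2} x_1^{a_1}$ with $0 \le a_1 \le l_1-1$ and $0 \le a_2 \le l_2-1$ are linearly independent; since they already span $M_1 = M$, the dimension will equal $l_1 l_2$. Note that each $e(a_1, a_2)$ is nonzero because $\alpha_1, \alpha_2 \in \mathbb{K}^*$ force $x_1$ and $x_2$ to act invertibly on $M$ (with $x_i^{-1} = \alpha_i^{-1} x_i^{l_i-1}$).

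The key idea is that the commuting normal elements $z_1, z_2$ (which commute by Lemma \ref{crq}) should diagonalize simultaneously on these vectors, and the resulting eigenvalue pairs will be pairwise distinct. First I would compute $e(a_1,a_2) z_1$: using $z_1 x_1 = q_1^{-1} x_1 z_1$ (so $x_1^{a_1} z_1 = q_1^{a_1} z_1 x_1^{a_1}$) and the fact that $z_1$ commutes with $x_2$, pushing $z_1$ past $x_2^{a_2} x_1^{a_1}$ and using $v z_1 = \gamma_1 v$ yields
\[
e(a_1,a_2)\, z_1 = q_1^{a_1}\gamma_1 \, e(a_1,a_2).
\]
A completely analogous push, now using both $z_2 x_1 = q_1^{-1} x_1 z_2$ and $z_2 x_2 = q_2^{-1} x_2 z_2$ along with $v z_2 = \gamma_2 v$, gives
\[
e(a_1,a_2)\, z_2 = q_1^{a_1} q_2^{a_2} \gamma_2 \, e(a_1,a_2).
\]

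Finally, I would verify that the eigenvalue pairs $\bigl(q_1^{a_1}\gamma_1,\; q_1^{a_1} q_2^{a_2}\gamma_2\bigr)$ are pairwise distinct as $(a_1,a_2)$ ranges over $\{0,\dots,l_1-1\}\times\{0,\dots,l_2-1\}$. Since $q_1$ is a primitive $l_1$-th root of unity, the $z_1$-eigenvalue uniquely determines $a_1$; once $a_1$ is fixed, the $z_2$-eigenvalue reduces to determining $q_2^{a_2}$, and $q_2$ being a primitive $l_2$-th root of unity pins down $a_2$. Eigenvectors of commuting operators corresponding to distinct eigenvalue pairs are linearly independent, so the $l_1 l_2$ vectors $e(a_1,a_2)$ form a basis of $M_1$, giving $\dim_{\mathbb{K}} M_1 = l_1 l_2$. (This also meets the upper bound $\pideg({{A}_2^{\underline{q},\Lambda}}) = l_1 l_2$ from Remark \ref{fdb} and Theorem \ref{pidq}, as expected.)

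There is no real obstacle here; the entire argument is a matter of carefully transporting $z_1$ and $z_2$ through the monomials $x_2^{a_2} x_1^{a_1}$ using the commutation rules in Lemma \ref{crq}, and then invoking the primitivity of $q_1, q_2$ to separate the eigenvalues. The only mild subtlety is ensuring the commutation rule $z_1 x_2 = x_2 z_1$ is used (rather than the more general $z_i x_j = q_j^{-1} x_j z_i$, which is stated only for $j \le i$).
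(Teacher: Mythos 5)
Your proposal is correct and follows essentially the same route as the paper: both compute that $e(a_1,a_2)$ is a joint eigenvector of $z_1,z_2$ with eigenvalues $q_1^{a_1}\gamma_1$ and $q_1^{a_1}q_2^{a_2}\gamma_2$, observe that distinct $(a_1,a_2)$ give distinct eigenvalue pairs, and conclude linear independence. The only difference is cosmetic --- the paper writes out the standard induction showing that joint eigenvectors with pairwise distinct joint eigenvalues are independent, while you invoke that fact directly.
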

\begin{proof}
First observe from the above action that $\dime_{\mathbb{K}}(M_1)\leq l_1l_2$. Now to show linear independence, we will use induction on the $k$-element subsets consisting of the nonzero vectors $e(a_1,a_2)$. Clearly, it is true for $k=1$. Let us assume that any such $k$-element subsets are independent. Suppose \[S:=\{e(a^{(i)}_1,a^{(i)}_2):i=1,\cdots,k+1\}\] be a set of $k+1$ vectors such that  
\begin{equation}\label{sum}
  p:=\sum_{i=1}^{k+1} \xi_i~e(a^{(i)}_1,a^{(i)}_2)=0
\end{equation} for some $\xi_i\in \mathbb{K}$. Note that $e(a_1^{(i)},a_2^{(i)})$ is an eigenvector of the operator $z_1$ and $z_2$ on $M_1$ with eigenvalue $\nu_1^{(i)}:=q_1^{a_1^{(i)}}\gamma_1$ and $\nu_{2}^{(i)}:=q_1^{a_1^{(i)}}q_2^{a_2^{(i)}}\gamma_2$ respectively. Now the vectors $e\left(a_1^{(k)},a_2^{(k)}\right)$ and $e\left(a_1^{(k+1)},a_2^{(k+1)}\right)$ are distinct so that these vectors are eigenvector corresponding to the operator $z_r$ with distinct eigenvalues say, $\nu_r^{(k)}$ and $\nu_{r}^{(k+1)}$ respectively for at least one $r=1,2$. With this $r$ we compute that
\[0=pz_r-\nu_{r}^{(k+1)}p=\sum_{i=1}^{k} \xi_i(\nu_r^{(i)}-\nu_r^{(k+1)})~e\left(a_1^{(i)},a_2^{(i)}\right).
\]
Hence by induction hypothesis, it follows that
\begin{center}
    $\xi_i(\nu_r^{(i)}-\nu_r^{(k+1)})=0$ for all $i=1,\cdots,k$.
\end{center}
As $\nu_r^{(k)} \neq \nu_r^{(k+1)}$, we have $\xi_k=0$. Put $\xi_k=0$ in (\ref{sum}) and again using induction hypothesis we have $\xi_i=0$ for all $i=1,\cdots,k+1$. Hence the result follows.
\end{proof}
\begin{rema}
The simple ${{A}_2^{\underline{q},\Lambda}}$ modules $(M_1,\alpha_1,\alpha_2,\gamma_1,\gamma_2)$ and $(M_1,\alpha'_1,\alpha'_2,\gamma'_1,\gamma'_2)$ are isomorphic if and only if $\alpha_1=\alpha'_1,\beta_2=\beta'_2,\gamma_1=q_1^a\gamma'_1$ and $\gamma_2=q_1^{a}q_2^{b}\gamma'_2$, for some $0\leq a\leq l_1-1$ and $0\leq b\leq l_2-1$.
\end{rema}
\subsubsection{\bf{Next consider $\alpha_1=0$ and $\alpha_2\neq 0$}}\label{sov} First observe that the operator $x_1$ is nilpotent and the operator $x_2$ is invertible on $M$. Denote $\kere (x_1):=\{m\in M:mx_1=0\}$. Therefore $\kere (x_1)$ is a non-zero subspace of $M$.\\
\textbf{Step 1:} Now each of the commuting operators in $(\ref{cev1})$ must keep the $\mathbb{K}$-space $\kere (x_1)$ invariant due to commutation relations with $x_1$. Therefore we can choose a common eigenvector $w\in \kere (x_1)$ of the commuting operators $(\ref{cev1})$. Take 
\[\begin{array}{l}
wx_1=0,\ wx^{l_2}_2=\xi_2w,\\
wy^{l_i}_i=\eta_iw,~i=1,2\\
wz_i=\zeta_iw,~i=1,2
\end{array}\]
Clearly $\xi_2,\zeta_1,\zeta_2\in\mathbb{K}^*$ and $\eta_1,\eta_2\in\mathbb{K}$.\\
\textbf{Step 2:} 
We now claim that the vectors $wy_1^{a},~0\leq a\leq l_1-1$ in $M$ are non-zero. If $\eta_1 \neq 0$ then we are done. Otherwise suppose $k$ be the smallest index with $1\leq k\leq l_1$ such that $wy_1^{k-1}\neq 0\ \text{and}\ wy_1^{k}=0$. In fact after simplifying the equality $wy_1^{k}x_1=0$ we have 
\[
0=wy_1^{k}x_1=q_1^{-k}w\left(x_1y_1^{k}-\displaystyle\frac{q_1^{k}-1}{q_1-1}y_1^{k-1}\right)=-q_1^{-k}\displaystyle\frac{q_1^{k}-1}{q_1-1}wy_1^{k-1}.\]
This implies $k$ is the smallest index such that $q_1^{k}=1$ and hence $k=l_1$. This completes the proof of claim.\\
\textbf{Step 3:} As $\xi_2\neq 0$, it follows that the vectors \[e(a_1,a_2):=wx_2^{a_2}y_1^{a_1},\ 0\leq a_i\leq l_i-1\] in $M$ are nonzero. Let $M_2$ be the $\mathbb{K}$-subspace of $M$ spanned by these nonzero vectors. Similar to the previous one, $M_2$ is invariant under the action of ${{A}_2^{\underline{q},\Lambda}}$. In fact, using the defining relations and the identities in Lemma \ref{crq}, the action of generators of ${{A}_2^{\underline{q},\Lambda}}$ on $M_2$ is given by 
\[\begin{array}{l}
e(a_1,a_2)x_1=\begin{cases}
\displaystyle\frac{q_1^{-a_1+1}\zeta_1-1}{q_1-1} e(a_1-1,a_2),& a_1\neq 0\\
 0,& a_1=0
\end{cases}\\
e(a_1,a_2)x_2=\begin{cases}
 (q_1\lambda)^{-a_1}e(a_1,a_2+1),& 0\leq a_2\leq l_2-2\\
 \xi_2(q_1\lambda)^{a_1} e(a_1,0),& a_2=l_2-1
\end{cases}\\
e(a_1,a_2)y_1=\begin{cases}
 e(a_1+1,a_2),& 0\leq a_1\leq l_1-1\\
  \eta_1 e(0,a_2),& a_1=l_1-1
\end{cases}\\
e(a_1,a_2)y_2=\begin{cases}
 \lambda^{a_1}\displaystyle\frac{q_2^{a_2}\zeta_2-\zeta_1}{q_2-1}e(a_1,a_2-1),& a_2\neq 0\\
 \lambda^{a_1}\xi_2^{-1}\displaystyle\frac{\zeta_2-\zeta_1}{q_2-1} e(a_1,l_2-1),& a_2=0
\end{cases}
\end{array}\]  
Thus it follows that $M_2$ is a ${{A}_2^{\underline{q},\Lambda}}$-submodule of $M$. Since $M$ is a simple module, therefore we have $M=M_2$. Here the action of ${{A}_2^{\underline{q},\Lambda}}$ on $M_2$ depends on the scalars $\eta_1,\xi_2,\zeta_1,\zeta_2$. Thus the simple ${{A}_2^{\underline{q},\Lambda}}$-module ${M}_2$ is denoted by $(M_2,\eta_1,\xi_2,\zeta_1,\zeta_2)$ for $\eta_1\in \mathbb{K}$ and $\xi_2,\zeta_1,\zeta_2\in \mathbb{K}^*$ . Now the following result ensures the $\mathbb{K}$-dimension of $M_2$ and the proof is parallel to Theorem \ref{gqa}.
\begin{theo}
The simple ${{A}_2^{\underline{q},\Lambda}}$-module $(M_2,\eta_1,\xi_2,\zeta_1,\zeta_2)$ has dimension $l_1l_2$. Moreover the simple ${{A}_2^{\underline{q},\Lambda}}$-modules $(M_2,\eta_1,\xi_2,\zeta_1,\zeta_2)$ and $(M_2,\eta'_1,\xi'_2,\zeta'_1,\zeta'_2)$ are isomorphic if and only if $\eta_1=\eta'_1,\xi_2=\xi'_2,\zeta_1=q_1^{-a}\zeta'_1$ and $\zeta_2=q_1^{-a}q_2^{b}\zeta'_2$, for some $0\leq a\leq l_1-1$ and $0\leq b\leq l_2-1$.
\end{theo}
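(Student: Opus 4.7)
The dimension statement is established by repeating the two step argument of Theorem \ref{gqa}. The explicit action formulas in Subsection \ref{sov} already show that $\{e(a_1,a_2): 0\le a_1\le l_1-1,\ 0\le a_2\le l_2-1\}$ spans $M_2$, hence $\dime_\mathbb{K}(M_2)\le l_1 l_2$. For linear independence I would compute, using the commutation relations $z_1 y_1=q_1 y_1 z_1$, $z_1 x_2=x_2 z_1$, $z_2 y_1=q_1 y_1 z_2$, $z_2 x_2=q_2^{-1}x_2 z_2$ from Lemma \ref{crq}, that
\[
e(a_1,a_2)\,z_1=q_1^{-a_1}\zeta_1\, e(a_1,a_2),\qquad e(a_1,a_2)\,z_2=q_1^{-a_1}q_2^{a_2}\zeta_2\, e(a_1,a_2).
\]
Since $q_1,q_2$ have orders $l_1,l_2$ respectively, any two distinct basis vectors differ in at least one of these two eigenvalues, and the induction on the size of an alleged nontrivial linear relation used in Theorem \ref{gqa} (applying the operator $z_r-\nu_r^{(k+1)}$ for an appropriate $r\in\{1,2\}$) goes through unchanged.

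For the isomorphism classification, the necessary direction is the cleaner one. Any isomorphism $\phi:M_2\to M_2'$ must preserve $\kere(x_1)$, and from the action tables one checks that on this invariant subspace the central element $y_1^{l_1}$ acts by the scalar $\eta_1$ in $M_2$ (respectively $\eta_1'$ in $M_2'$), while $x_2^{l_2}$ acts by $\xi_2$ (respectively $\xi_2'$); therefore $\eta_1=\eta_1'$ and $\xi_2=\xi_2'$. Next, the multiset of $z_1$-eigenvalues on $M_2$ equals $\zeta_1\langle q_1\rangle$, each with multiplicity $l_2$, and similarly for $M_2'$, so $\zeta_1 \langle q_1\rangle=\zeta_1'\langle q_1\rangle$, yielding $\zeta_1=q_1^{-a}\zeta_1'$ for some $0\le a\le l_1-1$. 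Then comparing the joint $(z_1,z_2)$-eigenvalue associated to the common eigenvector $\phi(e(0,0))$ in $M_2'$ with that of $e(0,0)$ in $M_2$ forces, for the \emph{same} $a$, the relation $\zeta_2=q_1^{-a}q_2^{b}\zeta_2'$ for some $0\le b\le l_2-1$.

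For the sufficient direction I would construct the isomorphism explicitly. Assuming the four scalar relations, I would define $\phi(e(a_1,a_2))=c_{a_1,a_2}\, e'(a_1+a,\,a_2+b)$ (indices read modulo $l_1$ and $l_2$) with normalisation constants $c_{a_1,a_2}\in\mathbb{K}^*$ chosen by a telescoping recursion so that the action of $x_2$ is intertwined at each step; the choice is then pinned down so the action of $y_1$ and the boundary wrap-around factors $\xi_2,\eta_1$ match automatically, and the $y_2$ and $x_1$ formulas follow from the already matched values of $\zeta_1,\zeta_2$.

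The main obstacle I expect is the sufficient direction: finding the correct normalisation constants $c_{a_1,a_2}$ so that all four generator actions (in particular the $y_2$ action, which involves the $z_2$-eigenvalue shift) simultaneously commute with $\phi$, and checking consistency at the two cyclic boundaries $a_1=l_1-1$ and $a_2=l_2-1$. Once the correct ansatz is guessed from the eigenvalue bookkeeping of $z_1,z_2$ above, the verification should reduce to a finite routine calculation using the action tables of Subsection \ref{sov}.
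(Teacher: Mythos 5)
Your dimension argument is exactly the paper's: the paper proves this theorem only by declaring the proof ``parallel to Theorem \ref{gqa}'', and your computation of the $z_1$- and $z_2$-eigenvalues $q_1^{-a_1}\zeta_1$ and $q_1^{-a_1}q_2^{a_2}\zeta_2$ of $e(a_1,a_2)=wx_2^{a_2}y_1^{a_1}$ is correct and is precisely what makes that induction go through. For the isomorphism criterion the paper offers no proof at all (it is likewise only asserted in the remark following Theorem \ref{gqa}), so your eigenvalue-multiset argument for necessity and your explicit-intertwiner ansatz for sufficiency go beyond the paper rather than diverge from it; the sufficiency direction remains, as you acknowledge, a sketch whose boundary consistency checks at $a_1=l_1-1$ and $a_2=l_2-1$ still need to be carried out, and in the necessity direction you should be slightly careful that $\kere(x_1)$ can be larger than the span of the $e(0,a_2)$ when $\zeta_1\in\langle q_1\rangle$, so the identification of $\xi_2$ is cleanest on the joint $(x_1=0,\,z_1=\zeta_1)$-eigenspace.
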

Similarly one can easily deal with the following two cases:
\subsubsection{\bf{Consider $\alpha_1\neq 0$ and $\alpha_2=0$}}
Observe that $\kere(x_2)$ is a non-zero subspace of $M$ since $\alpha_2=0$. As the commuting operators (\ref{cev1}) keep the $\mathbb{K}$-space $\kere(x_2)$ invariant, so one can choose a common eigenvector $w\in \kere(x_2)$ of these operators. Put
\[\begin{array}{l}
wx_1^{l_1}=\xi_1 w,\ wx^{l_2}_2=0,\\
wy^{l_i}_i=\eta_iw,~i=1,2\\
wz_i=\zeta_iw,~i=1,2
\end{array}\]
Clearly $\xi_1,\zeta_1,\zeta_2\in\mathbb{K}^*$ and $\eta_1,\eta_2\in\mathbb{K}$. Analogous to the subsection \ref{sov}, here one can easily verify that the set \[S=\{wy_2^{a_2}x_1^{a_1}|0\leq a_i\leq l_i-1\}\] consists of non-zero vectors of $M$. Then the $\mathbb{K}$-subspace $M_3$ of $M$ spanned by the nonzero vectors in $S$ is invariant under ${{A}_2^{\underline{q},\Lambda}}$-action and hence $M=M_3$. Depending on the scalars in module action, the simple ${{A}_2^{\underline{q},\Lambda}}$-module ${M}_3$ is denoted by $(M_3,\xi_1,\eta_2,\zeta_1,\zeta_2)$ for $\eta_1\in \mathbb{K}$ and $\xi_2,\zeta_1,\zeta_2\in \mathbb{K}^*$. The following result provides the $\mathbb{K}$-dimension of $M_3$ and the proof is analogous to Theorem \ref{gqa}.
\begin{theo}
The simple ${{A}_2^{\underline{q},\Lambda}}$-module $(M_3,\xi_1,\eta_2,\zeta_1,\zeta_2)$ has dimension $l_1l_2$. Moreover the simple ${{A}_2^{\underline{q},\Lambda}}$-modules $(M_3,\xi_1,\eta_2,\zeta_1,\zeta_2)$ and $(M_3,\xi'_1,\eta'_2,\zeta'_1,\zeta'_2)$ are isomorphic if and only if $\xi_1=\xi'_1,\eta_2=\eta_2',\zeta_1=q_1^{a}\zeta'_1$ and $\zeta_2=q_1^{a}q_2^{-b}\zeta'_2$, for some $0\leq a\leq l_1-1$ and $0\leq b\leq l_2-1$.
\end{theo}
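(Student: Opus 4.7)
The plan is to follow the template of Theorem~\ref{gqa} in three stages: (i) an upper bound $\dime_{\mathbb{K}}(M_3) \le l_1 l_2$ from the spanning set, (ii) a matching lower bound via linear independence of the vectors $e(a_1, a_2)$, established using joint eigenvalues of the normal elements $z_1, z_2$, and (iii) the isomorphism criterion in both directions.

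For the dimension, the action of the four generators on $e(a_1, a_2) = w y_2^{a_2} x_1^{a_1}$ is worked out in complete analogy with subsection~\ref{sov} using Lemma~\ref{crq}, showing that $M_3 = \sum_{0 \le a_i \le l_i - 1} \mathbb{K}\, e(a_1, a_2)$, hence $\dime_{\mathbb{K}}(M_3) \le l_1 l_2$. For the matching lower bound I would compute, again from Lemma~\ref{crq}, the joint eigenvalues
\[
e(a_1, a_2)\, z_1 = q_1^{a_1} \zeta_1 \cdot e(a_1, a_2), \qquad e(a_1, a_2)\, z_2 = q_1^{a_1} q_2^{-a_2}\zeta_2 \cdot e(a_1, a_2).
\]
Since $q_1$ has order $l_1$ and $q_2$ has order $l_2$, distinct indices $(a_1, a_2) \ne (a_1', a_2')$ in the allowed range yield distinct eigenvalue pairs: $z_1$ separates them when $a_1 \ne a_1'$, and $z_2$ separates them when $a_1 = a_1'$ but $a_2 \ne a_2'$. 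The inductive Vandermonde-style argument from Theorem~\ref{gqa} then ports over verbatim: starting from a putative relation $\sum \xi_i\, e(a_1^{(i)}, a_2^{(i)}) = 0$, pick $r \in \{1, 2\}$ so that the $z_r$-eigenvalues of $e(a_1^{(k)}, a_2^{(k)})$ and $e(a_1^{(k+1)}, a_2^{(k+1)})$ differ, apply $z_r - \mu_r^{(k+1)}$ to drop one summand, and invoke the inductive hypothesis.

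For the isomorphism criterion, any isomorphism $\phi: (M_3, \xi_1, \eta_2, \zeta_1, \zeta_2) \to (M_3, \xi_1', \eta_2', \zeta_1', \zeta_2')$ must send the generating eigenvector $w$ to a vector of the target annihilated by $x_2$ that is a simultaneous eigenvector of the commuting family in~(\ref{cev1}) with matching eigenvalues. Matching the scalar of the central element $x_1^{l_1}$ yields $\xi_1 = \xi_1'$. Enumerating the candidate eigenvectors in the target $M_3'$ via its own action formulas then pins $(\eta_2, \zeta_1, \zeta_2)$ down to the announced orbit: the $y_2^{l_2}$-eigenvalue forces $\eta_2 = \eta_2'$, and the $(z_1, z_2)$-eigenvalues force $\zeta_1 = q_1^a \zeta_1'$ and $\zeta_2 = q_1^a q_2^{-b} \zeta_2'$ for some $0 \le a \le l_1 - 1$ and $0 \le b \le l_2 - 1$. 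For the converse direction I would write the index-shifted $\mathbb{K}$-linear bijection between the two bases and verify compatibility with each of the four generators using the tabulated action formulas. The main obstacle I expect is the forward direction: the eigenvalue bookkeeping inside $\kere(x_2)$ of the target module must be carried out carefully in order to isolate exactly the orbit structure $(q_1^a, q_1^a q_2^{-b})$ rather than the coarser one that central characters alone would impose, and to cross-check against the $y_2^{l_2}$-action to rule out spurious matches.
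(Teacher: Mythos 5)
Your proposal is correct and follows essentially the same route as the paper, which proves this theorem only by declaring it ``analogous to Theorem~\ref{gqa}'': you carry out precisely that analogy, and your computed joint eigenvalues $e(a_1,a_2)z_1=q_1^{a_1}\zeta_1\,e(a_1,a_2)$ and $e(a_1,a_2)z_2=q_1^{a_1}q_2^{-a_2}\zeta_2\,e(a_1,a_2)$ are exactly what is needed for the separation/induction argument to port over and for the exponents $q_1^{a}$, $q_1^{a}q_2^{-b}$ in the isomorphism criterion. The isomorphism direction is likewise treated in the paper only implicitly (as in the unproved Remark following Theorem~\ref{gqa}), and your sketch of matching eigenvectors in $\kere(x_2)$ is the natural way to supply it.
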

\subsubsection{\bf{Consider $\alpha_1=0$ and $\alpha_2=0$}} First note that $\kere(x_1)$ and $\kere(x_2)$ are non-zero subspaces of $M$ as $\alpha_1=\alpha_2=0$. Due to the commutation relation $x_1x_2=q_1\lambda x_2x_1$, the nilpotent operator $x_2$ on $M$ must keep the subspace $\kere(x_1)$ invariant. Therefore $\kere(x_1)\cap \kere(x_2)\neq \{0\}$. Now one can choose a common eigenvector $w\in \kere(x_1)\cap \kere(x_2)$ of the commuting operators (\ref{cev1}). This is possible because the operators (\ref{cev1}) keep the $\mathbb{K}$-space $\kere(x_1)\cap \kere(x_2)$ invariant. Take
\[\begin{array}{l}
wx_1=0,\ wx_2=0,\\
wy^{l_i}_i=\eta_iw,~i=1,2\\
wz_i=\zeta_iw,~i=1,2
\end{array}\]
for some $\eta_1,\eta_2\in\mathbb{K}$ and $\zeta_1,\zeta_2\in\mathbb{K}^*$. Now it can be easily verified that the set \[S=\{wy_2^{a_2}y_1^{a_1}|0\leq a_i\leq l_i-1\}\] consists of non-zero vectors of $M$. It follows from the similar arguments used in (\ref{base}), the $\mathbb{K}$-subspace $M_4$ of $M$ spanned by $S$ is invariant under the action of ${{A}_2^{\underline{q},\Lambda}}$ and hence we have $M=M_4$. The simple ${{A}_2^{\underline{q},\Lambda}}$-module ${M}_4$ is denoted by $(M_4,\eta_1,\eta_2,\zeta_1,\zeta_2)$ for $\eta_1,\eta_2\in \mathbb{K}$ and $\zeta_1,\zeta_2\in \mathbb{K}^*$. Now the following result ensures the $\mathbb{K}$-dimension of $M_4$ and the proof is parallel to Theorem \ref{gqa}. 
\begin{theo}
The simple ${{A}_2^{\underline{q},\Lambda}}$-module $(M_4,\eta_1,\eta_2,\zeta_1,\zeta_2)$ has dimension $l_1l_2$. Moreover the simple ${{A}_2^{\underline{q},\Lambda}}$-modules $(M_4,\eta_1,\eta_2,\zeta_1,\zeta_2)$ and $(M_4,\eta'_1,\eta'_2,\zeta'_1,\zeta'_2)$ are isomorphic if and only if $\eta_1=\eta'_1,\eta_2=\eta_2',\zeta_1=q_1^{-a}\zeta'_1$ and $\zeta_2=q_1^{-a}q_2^{-b}\zeta'_2$, for some $0\leq a\leq l_1-1$ and $0\leq b\leq l_2-1$.
\end{theo}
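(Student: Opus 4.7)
The proof plan mirrors Theorem \ref{gqa}. The key observation is that each basis vector $e(a_1,a_2) = wy_2^{a_2}y_1^{a_1}$ of $M_4$ is a simultaneous eigenvector of the commuting normal elements $z_1$ and $z_2$. Using the identities from Lemma \ref{crq} ($z_1y_1 = q_1y_1z_1$, $z_1y_2 = y_2z_1$, $z_2y_1 = q_1y_1z_2$, $z_2y_2 = q_2y_2z_2$), a direct computation yields
\[
e(a_1,a_2)\,z_1 = q_1^{-a_1}\zeta_1\, e(a_1,a_2), \qquad e(a_1,a_2)\,z_2 = q_1^{-a_1}q_2^{-a_2}\zeta_2\, e(a_1,a_2).
\]
Since $q_i$ is a primitive $l_i$-th root of unity, these joint eigenvalues are pairwise distinct across $(a_1,a_2)$ with $0 \le a_i \le l_i - 1$.

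For the dimension claim, the action formulas (derived as in subsection \ref{sov} using Lemma \ref{crq}) immediately give $\dime_{\mathbb{K}}(M_4) \le l_1 l_2$. For the reverse inequality, I would run the induction from Theorem \ref{gqa} on subsets of the spanning set: given a relation $p = \sum_{i=1}^{k+1}\xi_i\,e(a_1^{(i)},a_2^{(i)}) = 0$, pick $r \in \{1,2\}$ so that the $k$-th and $(k+1)$-th vectors have distinct $z_r$-eigenvalues (guaranteed by the observation above), and compute $pz_r - \nu_r^{(k+1)}p = 0$ to reduce to a $k$-term relation. The induction hypothesis kills the remaining coefficients, yielding linear independence.

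For the ``only if'' direction of the isomorphism criterion, $y_1^{l_1}$ and $y_2^{l_2}$ are central, so any isomorphism forces $\eta_i = \eta_i'$. The joint spectrum of $(z_1,z_2)$ is also an isomorphism invariant; matching the pair obtained at $(a_1,a_2) = (0,0)$ in the source with some pair from the target yields $\zeta_1 = q_1^{-a}\zeta_1'$ and $\zeta_2 = q_1^{-a}q_2^{-b}\zeta_2'$ for some integers $0 \le a \le l_1 - 1$ and $0 \le b \le l_2 - 1$.

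For the ``if'' direction, I would construct the isomorphism by producing a vector $\tilde w$ in $(M_4,\eta_1',\eta_2',\zeta_1',\zeta_2')$ that satisfies the defining relations of the canonical generator of $(M_4,\eta_1,\eta_2,\zeta_1,\zeta_2)$, namely $\tilde w x_1 = \tilde w x_2 = 0$, $\tilde w z_i = \zeta_i \tilde w$, and $\tilde w y_i^{l_i} = \eta_i \tilde w$. The plan is to take $\tilde w$ to be an appropriate scalar multiple of a suitable vector built from $w'$ via the actions of $y_1$ and $y_2$, guided by the shift relations $\zeta_i = q_1^{-a}q_2^{\cdots}\zeta_i'$. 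Once such $\tilde w$ is in hand, the map $w \mapsto \tilde w$ extends uniquely by the module action to a nonzero ${{A}_2^{\underline{q},\Lambda}}$-module homomorphism between two simple modules of equal $\mathbb{K}$-dimension $l_1 l_2$, hence an isomorphism by Schur's lemma and dimension count. The main obstacle is producing and verifying this intertwiner $\tilde w$ with the prescribed kernel and eigenvalue properties, which relies on careful manipulation of the commutation relations from Lemma \ref{crq}.
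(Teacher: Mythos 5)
Your proposal is correct and follows essentially the same route as the paper, which itself only says the proof is ``parallel to Theorem \ref{gqa}'': you compute that each $e(a_1,a_2)=wy_2^{a_2}y_1^{a_1}$ is a joint $(z_1,z_2)$-eigenvector with eigenvalues $(q_1^{-a_1}\zeta_1,\,q_1^{-a_1}q_2^{-a_2}\zeta_2)$, note these are pairwise distinct, and run the same induction to get linear independence, hence dimension $l_1l_2$. Your additional sketch of the isomorphism criterion (centrality of $y_i^{l_i}$ forcing $\eta_i=\eta_i'$, invariance of the joint $(z_1,z_2)$-spectrum for the ``only if'' direction, and an explicit intertwiner for the ``if'' direction) is sound and in fact supplies detail that the paper leaves implicit.
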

\subsection{Simple $z_1$-torsionfree and $z_2$-torsion ${{A}_2^{\underline{q},\Lambda}}$-modules}\label{sub2}
Suppose $M$ be a $z_1$ torsionfree and $z_2$-torsion simple ${{A}_2^{\underline{q},\Lambda}}$-module. That is $Mz_1\neq 0$ and $Mz_2=0$. Then $M$ becomes a $x_2,y_2$-torsionfree simple module and this will follow from the identity $z_2=z_1+(q_2-1)y_2x_2$. Observe that each of the elements
\begin{equation}\label{cop}
  x^{l_1}_1,y^{l_1}_1,x_2,z_1  
\end{equation} commutes in ${{A}_2^{\underline{q},\Lambda}}$ (see Lemma \ref{crq}). Since $M$ is finite $\mathbb{K}$-dimensional, then there is a common eigenvector $v$ in $M$ of the commuting operators (\ref{cop}). Put 
\[vx_1^{l_1}=\alpha v, \ vy_1^{l_1}=\beta v,\ vx_2=\xi v,\ vz_1=\gamma v,\]
for some $\alpha,\beta,\xi,\gamma\in \mathbb{K}$. It is clear that $\xi$ and $\gamma$ are nonzero. Note that the central elements $x_1^{l_1}$ and $y_1^{l_1}$ act as scalars on $M$, by Schur's lemma. In the following we shall determine the structure of simple ${{A}_2^{\underline{q},\Lambda}}$-module $M$ according to the scalars $\alpha$ and $\beta$:
\subsubsection{\bf{First consider $\alpha\neq 0$}} The operator $x_1$ on $M$ is invertible. So the vectors $vx_1^r$ where $0\leq r\leq l_1-1$ in $M$ are non-zero. Now we claim that the vector subspace $M_5$ spanned by these non-zero vectors is invariant under the action of ${{A}_2^{\underline{q},\Lambda}}$. Indeed it follows easily from the following actions:
\[
\begin{array}{l}
(vx_1^r)x_1=\begin{cases}
vx_1^{r+1},& 0\leq r\leq l_1-2\\
\alpha v,& r=l_1-1
\end{cases}\\
(vx_1^r)y_1=\begin{cases}
 \displaystyle\frac{\gamma q_1^r-1}{q_1-1}vx_1^{r-1},& 1\leq r\leq l_1-1\\
 \alpha^{-1}\displaystyle\frac{\gamma-1}{q_1-1}vx_1^{l-1},& r=0
\end{cases}\\
(vx_1^r)x_2=(q_1\lambda)^r\xi vx_1^r,\ 0\leq r\leq l_1-1\\
(vx_1^r)y_2=\displaystyle\frac{\xi^{-1}\gamma\lambda^{-r}}{1-q_2} vx_1^{r},\ \ 0\leq r\leq l_1-1
\end{array}
\]
Therefore $M$ being a simple module, $M=M_5$. The simple ${{A}_2^{\underline{q},\Lambda}}$-module $M_5$ given above is denoted by $(M_5,\alpha,\xi,\gamma)$ for some $\alpha,\xi,\gamma\in \mathbb{K}^*$. Note that the vectors $vx_1^r$ in $M_5$ are eigenvectors of the operator $z_1$ corresponding to the distinct eigenvalues. Thus we have the following result:
\begin{theo} 
The simple ${{A}_2^{\underline{q},\Lambda}}$-module $(M_5,\alpha,\xi,\gamma)$ has dimension $l_1$. Furthermore the simple ${{A}_2^{\underline{q},\Lambda}}$-module $(M_5,\alpha,\xi,\gamma)$ and $(M_5,\alpha',\xi',\gamma')$ are isomorphic if and only if $\alpha=\alpha', \xi=(q_1\lambda)^r\xi'$ and $\gamma=q^r\gamma'$ for some $0\leq r\leq l_1-1$.
\end{theo}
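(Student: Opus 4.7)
The plan is to establish the dimension by exhibiting the spanning vectors $vx_1^r$ as simultaneous eigenvectors of $z_1$ with pairwise distinct eigenvalues, and then to classify isomorphisms by tracking the joint spectrum of the commuting operators $x_1^{l_1}$, $x_2$, and $z_1$.

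For the dimension, the explicit formulas for the action of the generators already display $M_5$ as the $\mathbb{K}$-span of $\{vx_1^r:0\le r\le l_1-1\}$, which gives the bound $\dime_{\mathbb{K}}(M_5)\le l_1$. For the reverse inequality, I would invoke the commutation relation $z_1x_1=q_1^{-1}x_1z_1$ from Lemma~\ref{crq}, which rewrites as $x_1z_1=q_1z_1x_1$ and hence $x_1^rz_1=q_1^rz_1x_1^r$. Consequently $(vx_1^r)z_1=q_1^r\gamma\,(vx_1^r)$, so each $vx_1^r$ is a $z_1$-eigenvector with eigenvalue $q_1^r\gamma$. Since $q_1$ is a primitive $l_1$-th root of unity and $\gamma\in\mathbb{K}^*$, the $l_1$ eigenvalues $\gamma,q_1\gamma,\ldots,q_1^{l_1-1}\gamma$ are pairwise distinct; the corresponding eigenvectors are therefore linearly independent, which settles $\dime_{\mathbb{K}}(M_5)=l_1$.

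For the isomorphism statement, suppose $\phi:(M_5,\alpha,\xi,\gamma)\to(M_5,\alpha',\xi',\gamma')$ is a ${{A}_2^{\underline{q},\Lambda}}$-module isomorphism. Schur's lemma and the centrality of $x_1^{l_1}$ force $\alpha=\alpha'$. The action tables show that on $(M_5,\alpha',\xi',\gamma')$ the commuting operators $x_2$ and $z_1$ have simultaneous eigenvalues $\bigl((q_1\lambda)^r\xi',\,q_1^r\gamma'\bigr)$ on $v'x_1^r$, and by the distinctness argument above each joint eigenspace is one-dimensional. Since $\phi(v)$ is a joint eigenvector of $x_2$ and $z_1$ with eigenvalues $(\xi,\gamma)$, it must be a scalar multiple of some $v'x_1^r$, whence $\xi=(q_1\lambda)^r\xi'$ and $\gamma=q_1^r\gamma'$ for some $0\le r\le l_1-1$.

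Conversely, given such $r$, I would define $\phi:M_5\to M_5'$ on the basis by $\phi(vx_1^s)=v'x_1^{s+r}$ for $0\le s\le l_1-1-r$ and $\phi(vx_1^s)=\alpha'^{-1}v'x_1^{s+r-l_1}\cdot\alpha'=v'x_1^{s+r-l_1}$ using $v'x_1^{l_1}=\alpha'v'=\alpha v'$, then verify compatibility with the four generator actions; the $x_1$ and $x_2$ cases are immediate from $\alpha=\alpha'$ and $\xi=(q_1\lambda)^r\xi'$, while the $y_1$ case reduces to the identity $q_1^{s+r}\gamma'=q_1^s\gamma$, and the $y_2$ case to the same identity combined with $\xi=(q_1\lambda)^r\xi'$. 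The main obstacle is precisely this last sufficiency check: the $y_1$- and $y_2$-actions involve the scalars $\gamma,\gamma^{-1},\xi,\xi^{-1}$ entangled with powers of $q_1$ and $\lambda$, so the verification must be arranged so that the relations $\gamma=q_1^r\gamma'$ and $\xi=(q_1\lambda)^r\xi'$ are used in exactly the right combination; once this bookkeeping is done, the compatibility with all generators follows routinely from the action tables established for $M_5$.
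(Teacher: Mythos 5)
Your dimension argument is exactly the paper's: the paper justifies $\dime_{\mathbb{K}}(M_5)=l_1$ with the single observation that the $vx_1^r$ are $z_1$-eigenvectors with distinct eigenvalues $q_1^r\gamma$, which is what you prove (the paper's ``$\gamma=q^r\gamma'$'' should indeed be read as $q_1^r\gamma'$, consistent with your computation and with the analogous isomorphism remark for $M_1$). For the isomorphism criterion the paper offers no proof at all, so your joint-spectrum argument for necessity and explicit map for sufficiency go beyond the text; both are sound in outline. One small correction in the converse: with $\phi(v)=v'x_1^r$ and $\phi$ required to intertwine the $x_1$-action, the wraparound value must be $\phi(vx_1^s)=v'x_1^{s+r}=\alpha'\,v'x_1^{s+r-l_1}$ for $s+r\geq l_1$; your formula drops the factor $\alpha'$ (the expression $\alpha'^{-1}v'x_1^{s+r-l_1}\cdot\alpha'$ collapses to $v'x_1^{s+r-l_1}$), and without it $\phi$ fails to commute with $x_1$ unless $\alpha'=1$. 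With that factor restored, the compatibility checks for $y_1$ and $y_2$ reduce to the identities $\gamma=q_1^r\gamma'$ and $\xi=(q_1\lambda)^r\xi'$ as you indicate, and the proof is complete.
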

\subsubsection{\bf{Next consider $\alpha=0$}} One can observe that $x_1$ is nilpotent operator on $M$ and so $\kere(x_1)\neq\{0\}$. Due to the commutation relations with $x_1$, the operators in (\ref{cop}) keep the subspace $\kere(x_1)$ invariant and hence one can choose a common eigenvector $w\in \kere(x_1)$ of the commuting operators (\ref{cop}). Take
\[wx_1=0,\ wy_1^{l_1}=\beta w,\  wx_2=\xi w, wz_1=\gamma w\]
for some $\beta\in\mathbb{K}$ and $\xi,\gamma\in\mathbb{K}^*$. Then it is easily verified that the vectors $wy_1^r,\ 0\leq r\leq l-1$ in $M$ are nonzero. Let $M_6$ be the $\mathbb{K}$-subspace of $M$ spanned by the non-zero vectors $wy_1^r,\ 0\leq r\leq l-1$. One can verify that $M_6$ is invariant under ${{A}_2^{\underline{q},\Lambda}}$-action. In fact after some straightforward computation we have
\[\begin{array}{l}
(wy_1^r)x_1=\begin{cases}
 0,& r=0\\
 \displaystyle\frac{q_1^{-r}-1}{q_1-1}wy_1^{r-1},& 1\leq r\leq l_1-1
\end{cases}\\
(wy_1^r)y_1=\begin{cases}
 wy_1^{r+1},& 0\leq r\leq l_1-2\\
 \beta w,& r=l_1-1
\end{cases}\\
(wy_1^r)x_2=\displaystyle\frac{\xi^{-1}\gamma\lambda^r}{1-q_2}wy_1^r\\
(wy_1^r)y_2=(q_1\lambda)^{-r}\xi wy_1^r 
\end{array}
\]
Since $M$ is a simple module, therefore we have $M=M_6$. The simple ${{A}_2^{\underline{q},\Lambda}}$-module $M_6$ given above is denoted by $(M_6,\beta,\xi,\gamma)$ for some $\beta\in\mathbb{K}$ and $\xi,\gamma\in \mathbb{K}^*$. Finally the $\mathbb{K}$-dimension of $M_6$ is $l_1$ since the vectors $vy_1^r$ are eigenvectors of the operator $z_1$ corresponding to the distinct eigenvalues.
\begin{theo} 
The simple ${{A}_2^{\underline{q},\Lambda}}$-module $(M_6,\beta,\xi,\gamma)$ has dimension $l_1$. Furthermore the simple ${{A}_2^{\underline{q},\Lambda}}$-module $(M_6,\beta,\xi,\gamma)$ and $(M_6,\beta',\xi',\gamma')$ are isomorphic if and only if $\beta=\beta', \xi\xi'(1-q_2)=\lambda^r\gamma'$ and $\gamma=q^{-r}\gamma'$ for some $0\leq r\leq l_1-1$.
\end{theo}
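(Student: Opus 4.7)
The plan is to follow the template established in Theorem \ref{gqa} and its analogues. There are two independent claims to address: the dimension count and the isomorphism classification.

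For the dimension, the spanning statement $\dime_{\mathbb{K}}(M_6) \leq l_1$ is immediate from the action table, since every generator sends $wy_1^r$ to a scalar multiple of some $wy_1^s$. For the lower bound, I would observe that each $wy_1^r$ is an eigenvector of the normal element $z_1$: using $z_1 y_1 = q_1 y_1 z_1$ one finds $(wy_1^r)z_1 = q_1^{-r}\gamma\cdot wy_1^r$, and the $l_1$ eigenvalues $q_1^{-r}\gamma$ ($0 \leq r \leq l_1-1$) are pairwise distinct because $q_1$ is a primitive $l_1$-th root of unity and $\gamma \in \mathbb{K}^*$. The inductive peeling argument used in Theorem \ref{gqa}, applying $z_1 - \nu^{(k+1)}$ to a putative dependence relation to kill its last summand, then yields linear independence, giving $\dime_{\mathbb{K}}(M_6) = l_1$.

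For the forward direction of the isomorphism criterion, I would start with an isomorphism $\phi:(M_6,\beta,\xi,\gamma)\to(M_6,\beta',\xi',\gamma')$ and invoke Schur's lemma. The central element $y_1^{l_1}$ acts as $\beta$ on the first module and as $\beta'$ on the second, forcing $\beta = \beta'$. Because $\phi$ is $z_1$-equivariant and preserves eigenspaces, matching a pair of eigenvalues produces an integer $r$ with $\gamma = q^{-r}\gamma'$; comparing the action of $x_2$ on the corresponding $z_1$-eigenvectors then yields the relation $\xi\xi'(1-q_2) = \lambda^r\gamma'$, using the explicit diagonal formula from the action table.

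For the reverse direction, given the three scalar relations, I would construct an explicit intertwiner by setting $\phi(w) = c\cdot w'(y_1)^r$ for a suitable nonzero scalar $c$ and extending $\mathbb{K}$-linearly via $\phi(wy_1^j) = \phi(w)\cdot y_1^j$. Compatibility with $y_1$ is then built in; compatibility at the wrap-around $y_1^{l_1}$ uses $\beta = \beta'$; compatibility with $x_1$ follows from the $z_1$-eigenvalue match $\gamma = q^{-r}\gamma'$; and compatibility with $x_2$ follows from $\xi\xi'(1-q_2)=\lambda^r\gamma'$, with the $y_2$-action being automatic from $z_2 = 0$ on both modules. The main obstacle will be the careful bookkeeping with indices and the $y_1^{l_1}$ wrap-around, all of which reduces to a handful of routine verifications against the explicit action table displayed above.
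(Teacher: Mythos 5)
Your dimension argument is correct and coincides with the paper's own (one\nobreakdash-line) justification: the paper notes only that the vectors $wy_1^r$ are $z_1$-eigenvectors with the pairwise distinct eigenvalues $q_1^{-r}\gamma$, and your inductive peeling is just the standard proof that eigenvectors with distinct eigenvalues are independent. The paper supplies no argument at all for the isomorphism criterion, so your second half can only be measured against the statement itself.

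There the proposal has a genuine gap in the converse direction. The intertwiner $\phi(wy_1^j)=c\,w'y_1^{\,r+j}$ does \emph{not} commute with $x_1$ when $r\neq 0$: from the action table $(w'y_1^{n})x_1=\frac{q_1^{-n}-1}{q_1-1}w'y_1^{n-1}$, so $\phi\bigl((wy_1^j)x_1\bigr)$ carries the coefficient $\frac{q_1^{-j}-1}{q_1-1}$ while $\phi(wy_1^j)x_1$ carries $\frac{q_1^{-(r+j)}-1}{q_1-1}$, and these agree only if $q_1^{r}=1$. Unlike the $M_5$ case, where the corresponding coefficient $\frac{\gamma q_1^r-1}{q_1-1}$ contains the $z_1$-eigenvalue and therefore transforms correctly under re-basing along the orbit, here the $x_1$-coefficients contain no $\gamma$ (because $wx_1=0$), so the eigenvalue match $\gamma=q^{-r}\gamma'$ does not rescue $x_1$-equivariance. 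The structural reason is that $\kere(x_1)$ is one-dimensional in $M_6$, spanned by $w$ alone (since $(wy_1^{n})x_1\neq 0$ for $1\leq n\leq l_1-1$), so any isomorphism must send $w$ to a scalar multiple of $w'$, forcing $r=0$; note also that $wx_1=0$ together with $z_1=x_1y_1-y_1x_1=1+(q_1-1)y_1x_1$ forces $wz_1=q_1^{-1}w$, so $\gamma$ is not even a free parameter, which collapses the eigenvalue-matching step entirely. Carried out honestly, your outline would therefore not verify the criterion as stated but would instead yield $r=0$ (hence $\beta=\beta'$, $\gamma=\gamma'$, and equality of the $x_2$-eigenvalues); the discrepancy lies in the theorem's stated criterion rather than in a repairable step of your construction, and your write-up should flag this rather than assert that the verification "reduces to routine checks."
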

\subsection{Simple $z_1$-torsion ${{A}_2^{\underline{q},\Lambda}}$-modules}\label{subf} Suppose $\mathcal{N}$ be a $z_1$-torsion simple ${{A}_2^{\underline{q},\Lambda}}$-module. Then $\mathcal{N}$ becomes $x_1,y_1$-torsionfree simple module over the factor algebra ${{A}_2^{\underline{q},\Lambda}}/\langle z_1\rangle$ because of the relation $z_1=1+(q_1-1)y_1x_1$. Also this factor algebra ${{A}_2^{\underline{q},\Lambda}}/\langle z_1\rangle$ is isomorphic to the factor $\mathcal{O}_{\mathbf{q}}(\mathbb{K}^4)/\langle (q_1-1)y_1x_1+1\rangle$ of a quantum affine space of rank $4$, where \[\mathbf{q}=\begin{pmatrix}
1&1&\lambda_{12}&q^{-1}_1\lambda^{-1}_{12}\\
1&1&\lambda^{-1}_{12}&q_1\lambda_{12}\\
\lambda^{-1}_{12}&\lambda_{12}&1&q^{-1}_2\\
q_1\lambda_{12}&q^{-1}_1\lambda^{-1}_{12}&q_2&1
\end{pmatrix}.\]
Thus $\mathcal{N}$ is a $x_1,y_1$-torsionfree simple module over $\mathcal{O}_{\mathbf{q}}(\mathbb{K}^4)/\langle (q_1-1)y_1x_1+1\rangle$ as well as over $\mathcal{O}_{\mathbf{q}}(\mathbb{K}^4)$. Now under the assumptions (\ref{asm}) on the defining multiparameter the integral matrix associated to this $\mathbf{q}$ has rank $2$ and then one can easily compute that $\pideg \mathcal{O}_{\mathbf{q}}(\mathbb{K}^4)=\lcmu(l_1,l_2)$ with the help of \cite[Lemma 5.7]{ar}. The simple modules over $\mathcal{O}_{\mathbf{q}}(\mathbb{K}^4)$ have been classified in \cite{smsb}. With this classification, here one can classified such simple module $\mathcal{N}$ and the possible $\mathbb{K}$-dimension of $\mathcal{N}$ is as follows:
\begin{itemize}
    \item if $\mathcal{N}$ is $x_1,y_1,x_2,y_2$-torsionfree, then $\dime_{\mathbb{K}}(\mathcal{N})=\lcmu(l_1,l_2)$.
    \item if $\mathcal{N}$ is $x_1,y_1,x_2$-torsionfree and $y_2$-torsion, then $\dime_{\mathbb{K}}(\mathcal{N})=\ord(q_1\lambda_{12})$. 
    \item if $\mathcal{N}$ is $x_1,y_1,y_2$-torsionfree and $x_2$-torsion, then $\dime_{\mathbb{K}}(\mathcal{N})=\ord(\lambda_{12})$.
   \item if $\mathcal{N}$ is $x_1,y_1$-torsionfree and $x_2,y_2$-torsion, then $\dime_{\mathbb{K}}(\mathcal{N})=1$.
\end{itemize}
\begin{rema}
It is clear from the action of ${{A}_2^{\underline{q},\Lambda}}$ that there does not exist any isomorphism between the above types of simple ${{A}_2^{\underline{q},\Lambda}}$-modules. In view of subsections (\ref{sub1})-(\ref{subf}), one can conclude that the simple $z_1,z_2$-torsion free ${{A}_2^{\underline{q},\Lambda}}$-modules have only maximal $\mathbb{K}$-dimension which is equal to $\pideg ({{A}_2^{\underline{q},\Lambda}})$. Thus we have classified simple ${{A}_2^{\underline{q},\Lambda}}$-modules up to equivalence in terms of scalar parameters.
\end{rema}
\section{\bf{Simple Modules over alternative quantum Weyl algebra}}
In this section we shall focus on classifying simple modules over alternative quantum Weyl algebra of rank $2$ under the assumption (\ref{asm}), analogous to quantum Weyl algebra. Let $N$ be a simple ${\mathcal{A}_2^{\underline{q},\Lambda}}$-module. As pointed out before the $\mathbb{K}$-dimension of $N$ is finite and bounded above by the PI degree $l_1l_2$. Similar to the quantum Weyl algebra, the elements $x_1^{l_1},y_1^{l_1},x_2^l,y_2^l$ are central in alternative quantum Weyl algebra and they act as scalars on $N$, by Schur's lemma. Also the action of each normal element $z_i$ on a simple ${\mathcal{A}_2^{\underline{q},\Lambda}}$-module $N$ is either zero (if $N$ is $z_i$-torsion) or invertible (if $N$ is $z_i$-torsionfree). Now depending on this fact the classification reduces in the following cases.
\subsection{\bf{Simple $z_1,z_2$-torsionfree ${\mathcal{A}_2^{\underline{q},\Lambda}}$-modules}}\label{s1} Suppose $N$ be a $z_1,z_2$-torsionfree simple module over ${\mathcal{A}_2^{\underline{q},\Lambda}}$. Then it follows from Proposition \ref{conn} that $N$ is a $z_1,z_2$-torsionfree simple module over ${{A}_2^{\underline{q},\Lambda}}$. Such simple modules have been classified already in Subsection \ref{sub1}. Here the $\mathbb{K}$-dimension of $N$ is $l_1l_2$.
\subsection{\bf{Simple $z_1$-torsionfree and $z_2$-torsion ${\mathcal{A}_2^{\underline{q},\Lambda}}$-modules}} Suppose $N$ be a $z_1$ torsionfree and $z_2$-torsion simple module over ${\mathcal{A}_2^{\underline{q},\Lambda}}$. Then $N$ becomes a $x_2,y_2$-torsionfree simple module because of the relation $z_2=1+(q_2-1)y_2x_2$ as in Lemma \ref{craq}. Now analogous to the Subsection \ref{sub2}, here one can classify such a simple such simple module $N$ with the commuting operators $x_1^{l_1},y_1^{l_1},x_2,z_2$. Here the $\mathbb{K}$-dimension of $N$ is $l_1$.
\subsection{\bf{Simple $z_1$-torsion and $z_2$-torsionfree ${\mathcal{A}_2^{\underline{q},\Lambda}}$-modules}} Suppose $N$ be a $z_1$-torsion and $z_2$-torsionfree simple ${\mathcal{A}_2^{\underline{q},\Lambda}}$-module. Now it follows from the relation $z_1=1+(q_1-1)y_1x_1$ that $N$ is a $x_1,y_1$-torsionfree simple module. Let $m$ denote the $\lcmu(\ord(\lambda),\ord(q_2))$. Then using the defining relations and Lemma \ref{craq}, one can verify that the elements $x_1,z_2,x_2^m,y_2^m$ commutes in ${\mathcal{A}_2^{\underline{q},\Lambda}}$. Thus here one can classify such a simple module $N$ with these commuting operators. This classification is also analogous to the Subsection \ref{sub2}. Here the $\mathbb{K}$-dimension of $N$ is $m$.
\subsection{\bf{Simple $z_1,z_2$-torsion ${\mathcal{A}_2^{\underline{q},\Lambda}}$-modules}}\label{sf} Suppose $N$ be a $z_1,z_2$-torsion simple ${\mathcal{A}_2^{\underline{q},\Lambda}}$-module. Clearly $N$ becomes $x_1,x_2,y_1,y_2$-torsionfree simple module because of the relations $z_i=1+(q_i-1)y_ix_i$ for $i=1,2$. Finally one can classify such a simple module $N$ with the help of commuting operators $y_1$ and $x_2^r$ where $r=\ord(\lambda)$. Here the $\mathbb{K}$-dimension of $N$ is $r$, where $r=\ord(\lambda)$. 
\par Thus in view of subsections (\ref{s1})-(\ref{sf}), one can conclude that the simple $z_1,z_2$-torsion free ${\mathcal{A}_2^{\underline{q},\Lambda}}$-modules have only maximal $\mathbb{K}$-dimension which is equal to $\pideg ({\mathcal{A}_2^{\underline{q},\Lambda}})$.
\section*{Acknowledgements}
The authors would also like to thank the National Board of Higher Mathematics, Department of Atomic Energy, Government of India for providing financial support to carry out this research.


\begin{thebibliography}{99}
\bibitem {aj} M. Akhavizadegan and D. Jordan, {Prime ideals of quantized Weyl algebras}, {\it Glasgow Math. J.}, {\bf{38}}(3), (1996), 283–297.

\bibitem{ad} J. Alev and F. Dumas, {Rigidit\'e des plongements des quotients primitifs minimaux de $U_q({sl}(2))$ dans lalg\'ebre quantique de Weyl-Hayashi}, {\it Nagoya Math. J.}, {\bf 143}, (1996), 119-146.

\bibitem{bav} V. V. Bavula, {Classification of the simple modules of the quantum Weyl algebra and the quantum plane}, Quantum groups and quantum spaces (Warsaw, 1995), 193–201, Banach Center Publ., 40, Polish Acad. Sci. Inst. Math., Warsaw, 1997.

\bibitem {blt}  J. Boyette , M. Leyk , J. Talley , T. Plunkett and K. Sipe, {Explicit representation theory of the quantum weyl algebra at roots of 1}, {\it Communications in Algebra}, {\bf 28}(11), (2000), 5269-5274.

\bibitem {brg} K. A. Brown and K. R. Goodearl, {\it Lectures on Algebraic Quantum Groups}, Advanced Courses in Mathematics CRM Barcelona, Birkh\"auser Verlag, Basel, 2002.

\bibitem {di} C. De Concini and C. Procesi, {Quantum Groups}, in : {\it D-modules, Representation Theory, and Quantum Groups}, Lecture Notes in Mathematics, 1565, Springer-Verlag, Berlin, 1993, 31-140.

\bibitem{dgo} Y. A. Drozd, B. L. Guzner, and S. A. Ovsienko, {Weight modules over generalized Weyl algebras}, {\it J. Algebra},  {\bf 184}(2), (1996), 491–504.

\bibitem {gz} A. Giaquinto and J. J. Zhang, {Quantum Weyl algebras}, {\it J. Algebra}, {\bf 176}(3), (1995), 861–881.

\bibitem{krg}  K. R. Goodearl, {Prime ideals in skew polynomial rings and quantized Weyl algebras}, {\it J. Algebra}, {\bf 150}(2), (1992), 324–377.

\bibitem{gh2} K. R. Goodearl and  J. T. Hartwig, {The isomorphism problem for multiparameter quantized Weyl algebras}, {\it S\~ao Paulo J. Math. Sci.}, {\bf 9}, (2015), 53-61.

\bibitem {he} B. Heider and L. Wang, {Irreducible Representations of the Quantum Weyl Algebra at Roots of Unity Given by Matrices}, {\it Communications In Algebra}, {\bf 42}(5), (2014), 2156-2162.

\bibitem{dj} D. Jordan, {Finite-dimensional simple modules over certain iterated skew polynomial rings}, {\it J. Pure Appl. Algebra}, {\bf 98}(1), (1995), 45-55.

\bibitem{daj} D. Jordan, {A simple localization of the quantized Weyl algebra}, {\it J. Algebra}, {\bf 174}(1), (1995), 267–281.

\bibitem{lm2} A. Leroy and J. Matczuk, {On q-skew iterated Ore extensions satisfying a polynomial identity}, {\it J. Algebra Appl.}, {\bf 10}(4), (2011), 771–781.

\bibitem{gm} G. Maltsiniotis, {Groupes quantique et structures diff\'erentielles}, {\it C. R. Acad. Sci. Paris S\'er. I Math.}, {\bf 311}, (1990), 831-834.

\bibitem {mcr} J. C. McConnell and J. C. Robson, {\it Noncommutative Noetherian Rings}, Graduate Studies in Mathematics 30, American Mathematical Society, Providence, RI, 2001.

\bibitem{smsb} Mukherjee, S., Bera, S. {Construction of Simple Modules over the Quantum Affine Space.} (To Appear) https://arxiv.org/abs/2001.07432

\bibitem{lr} L. Rigal, {Spectre de l'alg\'ebre de Weyl Quantique}, {\it Beitrage Algebra Geom.}, {\bf 37}, (1996), 119-148.

\bibitem{ar} A. Rogers, {Representations of Quantum Nilpotent Algebras at Roots of unity and their completely prime quotients}, PhD Thesis, University of Kent, 2019.

\bibitem{xt} X. Tang, {Automorphisms for Some Symmetric Multiparameter Quantized Weyl Algebras and Their Localizations}, {\it Algebra Colloq.}, {\bf 24}(3), (2017), 419–438.



\end{thebibliography}
\end{document}